\newcommand{\ve}{\varepsilon}
\newcommand{\vp}{\varphi}
\newcommand{\ld}{\ldots}
\DeclareMathOperator*{\ot}{\otimes}
\DeclareMathOperator*{\op}{\oplus}
\newcommand{\beq}{\begin{equation}}
\newcommand{\eeq}{\end{equation}}
\newcommand{\beas}{\begin{eqnarray*}}
\newcommand{\eeas}{\end{eqnarray*}}
\newcommand{\cH}{\mathcal{H}}
\newcommand{\R}{\mathbb{R}}
\renewcommand{\H}{\mathbb{H}}
\DeclareMathOperator{\Span}{\mathrm{Span}}
\newtheorem{theorem}{Theorem}[section]
\newtheorem{lemma}[theorem]{Lemma}
\newtheorem{corollary}[theorem]{Corollary}
\newtheorem{proposition}[theorem]{Proposition}
\title[Graded polynomial identities]{Graded polynomial identities as identities of universal algebras}
\author{Yuri Bahturin}
\thanks{Supported by NSERC grant number 227060-14}
\address{Department of Mathematics and Statistics, Memorial University of Newfoundland, St. John's, NL, A1C5S7, Canada.}
\email{bahturin@mun.ca}
\author{Felipe Yasumura}
\thanks{Supported by Fapesp grant number 2017/11.018-9.}
\address{Department of Mathematics and Statistics, Memorial University of Newfoundland, St. John's, NL, A1C5S7, Canada\\ Department of Mathematics, Universidade Estadual de Campinas, Campinas, SP, 13083-859, Brazil.}
\email{fyyasumura@mun.ca}
\subjclass[2010]{17A42, 08B20, 16R50}
\keywords{Graded Algebra, Graded Polynomial Identities, Polynomial Identities, Universal Algebra}
\begin{document}
\begin{abstract}
Let $A$ and $B$ be finite-dimensional simple algebras with arbitrary signature over an algebraically closed field. Suppose $A$ and $B$ are graded by a semigroup $S$ so that the graded identitical relations of  $A$ are the same as those of $B$.  Then $A$ is isomorphic to $B$ as an $S$-graded algebra.
\end{abstract}
\maketitle

\section{Introduction}

In this manuscript, we treat the well-known question whether having the same set of polynomial identities guarantees the isomorphism of algebras.  Some obvious restrictions are necessary. In the non-simple case, we have that an algebra $A$ satisfies the same polynomial identities as the sum $A\op A$ of two copies of itself. If the field of coefficients is not algebraically closed, then there exist easy examples of non-isomorphic algebras satisfying the same set of ordinary polynomial identities; for example, the algebra of real quaternions $\H$ and the matrix algebra $M_2(\R)$ have the same polynomial identities but $\H\not\cong M_2(\R)$. 

So we need to restrict ourselves to the case of ``simple'' algebras over algebraically closed fields. Here being simple depends on the full set of structures on the algebras, for example graded-simple, involution-simple, differentially-simple and so on.

In the context of Lie algebras this question was settled by Kushkulei and Razmyslov \cite{KR}, and in the context of Jordan algebras by Drensky and Racine \cite{DR}, and Shestakov and Zaicev obtained the result for arbitrary simple algebras \cite{SZ2011}. The case of associative algebras is trivial thanks to Amitsur - Levitzki's Theorem.

The case of simple associative algebras graded by abelian group has been resolved  by Koshlukov and Zaicev \cite{KZ}. Having analyzed the structure of $G$-graded simple associative algebras, $G$ not necessarily abelian, Aljadeff and Haile managed to expand the result of Koshlukov and Zaicev to arbitrary groups \cite{AH}. Recently, Bianchi and Diniz studied the problem for arbitrary graded-simple algebras, where the grading is by an abelian group \cite{BD2018}.

Imposing various restrictions, this isomorphism question can be investigated in the case of other non-simple algebras. In \cite{DS} the authors study the question for certain types of gradings on the associative algebras of upper-block triangular matrices. Also, the classification of group gradings on upper triangular matrices \cite{VZ} together with the proof of the classification of elementary gradings on the same algebra \cite{DKV} leads to a positive answer in the context of graded associative algebras of upper triangular matrices.

\subsection{$\Omega$-algebras}\label{ssoa}
In this paper we deal with so called  $\Omega$-algebras, where $\Omega$ is a set, called \textit{signature}. One has $\Omega=\bigcup_{m=0}^\infty\Omega_m$. An algebra $A$ is called an $\Omega$-algebra, if $A$ is a vector space such that every $\omega\in\Omega_m$ defines an $m$-ary operation on $A$, that is, a linear map $\omega:\underbrace{A\otimes\cdots\otimes A}_\text{$m$ times}\to A$. In a natural way, one can define the standard notions of subalgebras, homomorphisms of algebras, ideals, and so on.

Given a  non-empty set $X$, one can define the free $\Omega$-algebra $F=F_\Omega(X)$ as follows.  First we build the set $W=W_\Omega(X)$ of  $\Omega$-monomials in $X$ as the union of subsets $W_n$, $n=0,1,2,\ld$ given by $W_0=\Omega_0\cup X$ and for $n>0$,
\[
W_n=\bigcup_{m=1}^\infty\bigcup_{\omega\in\Omega_m}\bigcup_{i_1+\cdots+i_m+1\le n} \omega(W_{i_1},\ld,W_{i_m}).
\] 
From this definition, it follows that for any $\omega\in \Omega_m$ and any $a_1,\ld,a_m\in W$ the expression $\omega(a_1,\ld,a_m)$ is a well-defined element of $W$. The elements of $W_n$ are called monomials of degree $n$.

Then we consider the linear span $F=F_\Omega(X)$ of $W=W_\Omega(X)$. If $F_n=\Span\{ W_n\}$ then $F=\bigoplus_{n=0}^\infty F_n$. The elements of $F_n$ are called homogeneous polynomials of degree $n$. In a usual way, one defines the degree of an arbitrary nonzero polynomial. By linearity, every $\omega\in\Omega_m$ defines an $m$-linear operation on $F$. Also, it follows that for any $\Omega$-algebra $A$ any map $\vp:X\to A$ uniquely extends to a homomorphism $\bar{\vp}: F_\Omega(X)\to A$. The $\Omega$-algebra $F=F_\Omega(X)$ is called the free $\Omega$-algebra with the basis (=set of free generators) $X$.

The equation of the form $f(x_1,\ld,x_n)=0$ where $f(x_1,\ld,x_n)\in F_\Omega(X)$ is called a (polynomial) identity in an $\Omega$-algebra $A$ if under any map $\vp: X\to A$ one has $\bar{\vp}(f(x_1,\ld,x_n))=0$. In other way, $f(a_1,\ld,a_n)=0$, for any $a_1,\ld, a_n\in A$. If $A=F_\Omega(X)$ then $I$ is a T-ideal if with every $f(x_1,x_2,\ld,x_n)\in I$ also $f(a_1,a_2,\ld,a_n)\in I$, for any $a_1,\ld,a_n\in F_\Omega(X)$. Given T-ideal $I$, the algebra $F_\Omega(X)/I$ is called a \textit{relatively free} algebra. The set of all (left hand sides) of the identitical relations in an algebra $A$, depending on the variables in $X$ is a T-ideal $T(A)$ of $F_\Omega(X)$. We denote the relatively free algebra $F_\Omega(X)/T(A)$ by $F_\Omega^A(X)$. If $A$ and $B$ satisfy the same identities then $F_\Omega^A(X)=F_\Omega^B(X)$. Finally, we denoted by $\text{var}_\Omega\,A$ the variety  of $\Omega$-algebras generated by $A$.

For now on, for each set $\Omega$, we assume that $\cup_{m=2}^\infty\Omega_m\ne\emptyset$. In his book \cite{R1994} Yuri Pitirimovich Razmyslov  proves the following remarkable result.

\begin{theorem}\cite[Corollary 1 of Theorem 5.3]{R1994}\label{Razmyslov}
Two simple finite-dimensional $\Omega$-algebras over an algebraically closed field, satisfying the same  polynomial identities, are isomorphic. In other words, in the variety $\text{var}_\Omega\,A$ generated by a simple algebra $A$ there are no other simple $\Omega$-algebras.
\end{theorem}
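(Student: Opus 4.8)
The plan is to produce an isomorphism after a suitable field extension and then to descend it back to $\FF$. The first ingredient is the centroid $\Gamma(A)$, the $\FF$-algebra of linear maps $A\to A$ commuting with every operation $\omega\in\Omega$. A Schur-type argument shows that for a finite-dimensional simple $\Omega$-algebra $\Gamma(A)$ is a finite field extension of $\FF$, whence $\Gamma(A)=\FF$ because $\FF$ is algebraically closed; likewise $\Gamma(B)=\FF$. This ``central simplicity'' is what guarantees that $A\otimes_\FF K$ stays simple with centroid $K$ for every field extension $K/\FF$, and it is the property that makes the final descent faithful.

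Next I would realise the relatively free algebra by generic elements. Write $n=\dim_\FF A$, fix a basis $e_1,\dots,e_n$, adjoin commuting indeterminates $\xi_{ki}$ ($k\ge 1$, $1\le i\le n$), set $K_A=\FF[\xi_{ki}]$ and form the generic elements $\zeta_k=\sum_i \xi_{ki}\,e_i\in A\otimes_\FF K_A$. The subalgebra $\cG_A\subseteq A\otimes_\FF K_A$ generated by the $\zeta_k$ is isomorphic to $F_\Omega^A(X)$, since a polynomial evaluated on the $\zeta_k$ vanishes precisely when it lies in $T(A)$. Simplicity of $A$ forces $\cG_A$ to be prime, so its centroid $\Gamma(\cG_A)$ is an integral domain whose field of fractions $E_A\bydef\mathrm{Frac}\,\Gamma(\cG_A)$ sits inside $F_A\bydef\mathrm{Frac}(K_A)$, and the central closure $\wh{\cG}_A\bydef\cG_A\cdot E_A$ is a finite-dimensional central simple $E_A$-algebra. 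The crucial point, and one of the places the construction must be checked, is that $\wh{\cG}_A$ is a \emph{form of} $A$: because the first $n$ generic elements already span $A\otimes_\FF F_A$ over $F_A$, one gets $\wh{\cG}_A\otimes_{E_A}\overline{F_A}\cong A\otimes_\FF\overline{F_A}$. The same construction applied to $B$ yields $\cG_B\cong F_\Omega^B(X)$ and a form $\wh{\cG}_B$ of $B$ over $E_B\bydef\mathrm{Frac}\,\Gamma(\cG_B)$.

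Now I bring in the hypothesis. Since $T(A)=T(B)$ we have an $\FF$-algebra isomorphism $\cG_A\cong\cG_B$. Because the centroid is intrinsic to an $\Omega$-algebra, this isomorphism carries $\Gamma(\cG_A)$ onto $\Gamma(\cG_B)$, hence induces a field isomorphism $E_A\cong E_B\defby E$ and an isomorphism of central closures $\wh{\cG}_A\cong\wh{\cG}_B$ over $E$. Comparing $E$-dimensions gives $n=\dim_E\wh{\cG}_A=\dim_E\wh{\cG}_B=\dim_\FF B$, so $A$ and $B$ have the same dimension. Choosing an algebraically closed field $L$ that contains both $\overline{F_A}$ and $\overline{F_B}$ as extensions of $E$, and base-changing, I obtain
\[
A\otimes_\FF L\cong\wh{\cG}_A\otimes_E L\cong\wh{\cG}_B\otimes_E L\cong B\otimes_\FF L,
\]
an isomorphism of $\Omega$-algebras over the algebraically closed field $L\supseteq\FF$.

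It remains to descend from $L$ to $\FF$. The $\FF$-isomorphisms $A\to B$ are exactly the invertible linear maps $T$ with $T(\omega_A(a_1,\dots,a_m))=\omega_B(Ta_1,\dots,Ta_m)$ for all $\omega$ and all $a_i$; in coordinates these are polynomial equations in the entries of $T$ whose coefficients are the structure constants of $A$ and $B$ (hence lie in $\FF$), together with the open condition $\det T\ne 0$. Thus the isomorphisms form a locally closed subscheme of $\mathbb{A}^{n^2}$ defined over $\FF$; it is non-empty because of the $L$-point just produced, and a non-empty scheme of finite type over the algebraically closed field $\FF$ has an $\FF$-point. This $\FF$-point is the desired isomorphism $A\cong B$. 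The main obstacle is the middle paragraph: proving that $\cG_A$ is prime and that its central closure is genuinely a form of $A$ (so that localising recovers $A$ up to base change), together with the fact that the abstract isomorphism of relatively free algebras respects the centroids. This is precisely where simplicity and central simplicity are indispensable, and it is what upgrades ``same identities'' to ``isomorphic after base change''.
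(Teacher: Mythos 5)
The paper does not prove this statement: it is quoted verbatim from Razmyslov's book (Corollary 1 of Theorem 5.3 of \cite{R1994}) and used as a black box, so there is no internal proof to compare against. Judged on its own terms, your outline reproduces the standard architecture of such results (and is close in spirit to Razmyslov's actual argument): trivial centroid over an algebraically closed field, realization of $F_\Omega^A(X)$ by generic elements inside $A\otimes_\FF K_A$, identification of the two relatively free algebras from $T(A)=T(B)$, passage to central closures, and descent of the resulting isomorphism via the Nullstellensatz. The peripheral steps are fine: primeness of $\cG_A$ follows from $\cG_A\cdot F_A=A\otimes_\FF F_A$ together with simplicity of $A\otimes_\FF F_A$, and the final descent argument (the isomorphism locus is a nonempty scheme of finite type over $\FF$, hence has an $\FF$-point) is correct once equality of dimensions is known.

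The genuine gap is exactly the step you flag but do not close, and it is not a routine verification --- it is the actual content of Razmyslov's Theorem 5.3. Your argument that ``the first $n$ generic elements span $A\otimes_\FF F_A$ over $F_A$'' only gives surjectivity of $\wh{\cG}_A\otimes_{E_A}F_A\to A\otimes_\FF F_A$, i.e.\ $\dim_{E_A}\wh{\cG}_A\ge n$; it does not give the reverse inequality, which requires showing that any $n+1$ elements of $\cG_A$ that are $F_A$-linearly dependent are already dependent over $E_A$. Worse, it is not a priori clear that $\Gamma(\cG_A)$ is any larger than $\FF$: if $E_A=\FF$ then $\wh{\cG}_A=\cG_A$ is infinite-dimensional and the whole scheme collapses. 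One needs to manufacture enough centroid elements of the relatively free algebra (the analogue, for arbitrary signature, of central polynomials for $M_n$; Razmyslov does this via the multiplication algebra $M(A)=\End_\FF(A)$ and a Casimir/trace-form construction) and then prove the Martindale-type dependence lemma for the prime algebra $\cG_A$. Without these two ingredients the claim that $\wh{\cG}_A$ is a finite-dimensional form of $A$ --- and hence the dimension count and the base-changed isomorphism --- is unsupported. A secondary point to make precise in the same place: you should work with the extended (Martindale) centroid of the prime algebra $\cG_A$ rather than the fraction field of its literal centroid, or else prove they agree here; the two can differ for prime $\Omega$-algebras.
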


It is clear that this Theorem generalizes previous results for simple ungraded algebras. Moreover, Corollary 2 of Theorem 5.3 of \cite{R1994} states the same result for prime algebras.

In this paper  we show that any two algebras graded by the same group can be regarded as ungraded $\Omega$-algebras of the same
signature $\Omega$, and their graded identities are identities of $\Omega$-algebras. This allows us to settle the main problem in the case of finite-dimensional graded-simple (and graded-prime) algebras.

\section{Graded algebras as $\Omega$-algebras}

Let $K$ be an arbitrary field, $G$ any semigroup and $A$ an algebra over $K$, in the usual sense, that is with one binary operation. It is not necessary that $A$ is associative or is finite-dimensional. Consider a $G$-grading on $A$:
\[
A=\bigoplus_{g\in G}A_g.
\]

Then, for any $g\in G$ we have the natural projection $\pi_g:A\to A$
\begin{equation}\label{d_proj}
\pi_g\left(\sum_{h\in G}a_h\right)=a_g,\mbox{ where } a_h\in A_h, \mbox{ for any } h\in G.
\end{equation}
Consider the set $\Omega=\Omega_1\cup \Omega_2$, where $\Omega_1=\{ \pi_g\mid g\in G\}$ and $\Omega_2=\{ \mu\}$. We then view $A$ as an $\Omega$-algebra, whose underlying vector space is $A$ itself, $\mu$ a binary operation $\mu(x, y)=xy$, the original product on $A$, and each $\pi_g$ is a unary operation given by \eqref{d_proj}. The following two propositions are immediate.

\begin{proposition}\label{ideals}
Let $I\subset A$. Then $I$ is a $G$-graded ideal of $A$ if and only if $I$ is an ideal of $A$, as an $\Omega$-algebra.

In particular, $A$ is simple as a $G$-graded algebra if and only if $A$ is simple as an $\Omega$-algebra.
\end{proposition}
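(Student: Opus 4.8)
The plan is to unwind the definition of an ideal in the $\Omega$-algebra $A$ into two separate families of closure conditions, one coming from the binary operation $\mu$ and one from the unary operations $\pi_g$, and then to match each family against one of the two requirements in the definition of a $G$-graded ideal. Recall that a subspace $I\subseteq A$ is an ideal of the $\Omega$-algebra precisely when it is stable under each operation in $\Omega=\Omega_1\cup\Omega_2$, in the sense that $\omega(a_1,\ldots,a_m)\in I$ whenever $\omega\in\Omega_m$ and at least one $a_i$ lies in $I$. Since $\Omega_2=\{\mu\}$ and $\Omega_1=\{\pi_g\mid g\in G\}$, this amounts to the conjunction of (i) $\mu(I,A)\subseteq I$ and $\mu(A,I)\subseteq I$, i.e.\ $I$ is a two-sided ideal of $A$ for the original product, and (ii) $\pi_g(I)\subseteq I$ for every $g\in G$.

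The heart of the argument is the observation that condition (ii) is exactly the statement that $I$ is a graded subspace, i.e.\ $I=\bigoplus_{g\in G}(I\cap A_g)$. Indeed, if every $\pi_g$ maps $I$ into itself, then for any $x\in I$ each homogeneous component $\pi_g(x)$ lies in $I\cap A_g$; since only finitely many of these are nonzero, $x$ is a finite sum of elements of the $I\cap A_g$, giving $I\subseteq\bigoplus_g(I\cap A_g)$, and the reverse inclusion is immediate. Conversely, if $I$ is graded, then writing any $x\in I$ as $\sum_g y_g$ with $y_g\in I\cap A_g$ identifies $y_g=\pi_g(x)\in I$, so $\pi_g(I)\subseteq I$. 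Combining this equivalence with the identification of condition (i) as ordinary two-sidedness, both implications of the first assertion follow at once: a $G$-graded ideal is by definition a two-sided ideal that is a graded subspace, hence satisfies (i) and (ii) and is an $\Omega$-ideal; and an $\Omega$-ideal satisfies (i) and (ii), hence is a graded two-sided ideal.

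For the ``in particular'' clause, I would simply transport the first assertion through the definitions of simplicity. Graded-simplicity of $A$ means that the only $G$-graded ideals of $A$ are $0$ and $A$, while simplicity of $A$ as an $\Omega$-algebra means that its only $\Omega$-ideals are $0$ and $A$; since the first assertion yields an equality between the two families of ideals, and in particular fixes $0$ and $A$, the two notions of simplicity coincide.

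The only point requiring any care, and hence the main (if modest) obstacle, is the correct reading of ``ideal'' in the $\Omega$-algebra setting: a unary operation $\pi_g$ contributes the closure condition $\pi_g(I)\subseteq I$ rather than anything involving the product, and it is the equivalence between closure under all the $\pi_g$ and gradedness of $I$ that makes the whole statement immediate. One should also keep in mind that the direct sum may be infinite when $G$ is infinite, but this causes no difficulty, since every element of $A$ has only finitely many nonzero homogeneous components.
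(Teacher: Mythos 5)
Your argument is correct and is exactly the unwinding the paper has in mind: the authors state this proposition without proof (``The following two propositions are immediate''), and your identification of closure under each $\pi_g$ with gradedness of $I$, together with the reading of closure under $\mu$ as two-sidedness, is the intended justification. Nothing is missing.
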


\begin{proposition}\label{morphism}
Let $A$ and $B$ be two $G$-graded algebras and let $\vp:A\to B$ be any map. Then $\vp$ is a homomorphism of $G$-graded algebras if and only if $\vp$ is a homomorphism of $\Omega$-algebras.

In particular, $A$ is isomorphic to $B$ as a  $G$-graded algebra if and only if $A$ is isomorphic to $B$ as an  $\Omega$-algebra.
\end{proposition}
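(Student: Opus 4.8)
The plan is simply to unwind both notions of homomorphism and observe that they impose the same conditions on the underlying linear map $\vp$. In both settings $\vp$ is required to be linear, so the only operations that can distinguish the two definitions are those in $\Omega$. The binary operation $\mu$ records precisely the product, so commuting with $\mu$ is the same as the multiplicativity condition $\vp(xy)=\vp(x)\vp(y)$, and this is shared verbatim by the two definitions. Hence everything reduces to a single equivalence: $\vp$ respects the grading, i.e.\ $\vp(A_g)\subseteq B_g$ for every $g$, if and only if $\vp$ commutes with every projection $\pi_g$.

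For the forward implication I would take $x\in A$ and write $x=\sum_{h\in G}x_h$ with $x_h\in A_h$. Assuming $\vp(A_h)\subseteq B_h$, each $\vp(x_h)$ lies in $B_h$, so the $g$-homogeneous component of $\vp(x)=\sum_h\vp(x_h)$ is exactly $\vp(x_g)$; that is, $\pi_g(\vp(x))=\vp(x_g)=\vp(\pi_g(x))$. For the converse, suppose $\vp$ commutes with all $\pi_g$ and take a homogeneous element $x\in A_g$. Then $\pi_g(x)=x$, whence $\vp(x)=\vp(\pi_g(x))=\pi_g(\vp(x))$, which says precisely that $\vp(x)$ equals its own $g$-component, i.e.\ $\vp(x)\in B_g$. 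Thus $\vp(A_g)\subseteq B_g$ and $\vp$ respects the grading. This establishes the equivalence of the two homomorphism notions.

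For the ``in particular'' statement about isomorphisms, I would record two standard upgrades. First, a bijective homomorphism of $\Omega$-algebras automatically has a homomorphic inverse: applying $\vp^{-1}$ to $\vp(\omega(a_1,\ld,a_m))=\omega(\vp(a_1),\ld,\vp(a_m))$ and substituting $a_i=\vp^{-1}(b_i)$ yields the compatibility of $\vp^{-1}$ with each $\omega$. Second, a bijective graded homomorphism $\vp$ already satisfies $\vp(A_g)=B_g$, not merely $\vp(A_g)\subseteq B_g$: since $\vp$ is onto and the sum $\sum_g\vp(A_g)$ is direct with $\vp(A_g)\subseteq B_g$, the decomposition $B=\bigoplus_g\vp(A_g)$ forces equality in each degree. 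Combining these two facts with the equivalence just proved matches graded isomorphisms with $\Omega$-algebra isomorphisms.

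The argument is essentially a bookkeeping exercise, and there is no real obstacle. The only point demanding a moment's care is the isomorphism upgrade, where one must check that a bijective grading-preserving map sends each homogeneous component \emph{onto}, rather than merely \emph{into}, the corresponding one; but this is immediate from the directness of the grading decomposition, as indicated above.
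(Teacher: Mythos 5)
Your proof is correct; the paper actually omits the argument entirely, declaring the proposition ``immediate,'' and your direct unwinding of the two definitions (grading-preservation being equivalent to commuting with every $\pi_g$, plus the standard upgrades for the isomorphism statement) is exactly the verification the authors have in mind.
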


Since we have only one binary operation in $\Omega$, there is no confusion to write $ab$ instead of $\mu(a,b)$ in all $\Omega$-algebras. It is then easy to prove that the following identities of $\Omega$-algebras hold in $A$ ($x,y\in X$):
\begin{enumerate}
\item[(i)] If $g,h,k\in G$, then
\begin{align*}
\pi_g(\pi_h(x)\pi_k(y))=\left\{\begin{array}{l}\pi_h(x)\pi_k(y),\text{ if $g=hk$},\\0,\text{ if $g\ne hk$}.\end{array}\right.
\end{align*}
\item[(ii)] Let $g,h\in G$, then
\begin{align*}
\pi_g(\pi_h(x))=\left\{\begin{array}{l}\pi_g(x),\text{ if $g=h$},\\0,\text{ if $g\ne h$}.\end{array}\right.
\end{align*}
\end{enumerate}

From now on, we will be assuming that the $G$-grading on $A$ is finite, that is, $\text{Supp}\,A=\{ g\in G\,|\, A_g\ne \{ 0\}\}$ is finite. This always holds if $A$ is finite-dimensional or if $G$ is finite. In this case, if $S=\text{Supp}\,A$, then the following are identities in $A$:

\begin{align*}
\mbox{ \text{(iii)} }x=\sum_{g\in S}\pi_g(x), \quad \pi_h(x)=0,\text{ for $h\notin S$}.
\end{align*}

Let $M=\bigcup_{k=1}^\infty M_{k}$ be a subset of the set of monomials in $W=W_\Omega(X)$ defined by induction, as follows. If $k=1$ then $M_1=\{ \pi_g(x)\,|\,g\in G,\,x\in X\}$. If $k>1$ then $w=w_1w_2$, where $w_1\in M_{\ell}$,  $w_2\in M_{k-\ell}$, with $\ell\ge 1$.

Now let $T$ be the T-ideal in $F_\Omega(X)$ defined by the identities (i), (ii) and (iii). 

\begin{lemma}\label{pol_gen}
$F_\Omega(X)$ is the vector space span of $M$ and $T$.
\end{lemma}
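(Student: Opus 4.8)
The plan is to prove the slightly more precise assertion that every $\Omega$-monomial $w\in W_\Omega(X)$ is congruent modulo $T$ to a linear combination of elements of $M$. Since $F_\Omega(X)=\Span W_\Omega(X)$ and congruence modulo the T-ideal $T$ is compatible with linear combinations and with every operation $\pi_g$ and $\mu$ (the quotient $F_\Omega(X)/T$ being an $\Omega$-algebra), this immediately gives $F_\Omega(X)=\Span M+T$. I would run the argument by induction on the degree $n$ of $w$, that is, on the number of operation symbols occurring in $w$; the recursion is well founded because in $w=uv$ both factors have strictly smaller degree, and in $w=\pi_g(u)$ so does $u$.

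The key auxiliary fact, which I would isolate as a preliminary claim, is a \emph{homogeneity} statement for the normalized monomials: to each $m\in M$ one attaches an element $d(m)\in G$ by setting $d(\pi_g(x))=g$ for $m=\pi_g(x)\in M_1$ and $d(m_1m_2)=d(m_1)d(m_2)$ for $m=m_1m_2$ (this is well defined since the tree underlying $m$ fixes the bracketing). The claim is that, modulo $T$,
\[
\pi_g(m)\equiv m \ \text{ if } g=d(m), \qquad \pi_g(m)\equiv 0 \ \text{ if } g\neq d(m).
\]
I would prove this by induction on the structure of $m$. For $m=\pi_h(x)\in M_1$ it is exactly identity (ii). For $m=m_1m_2$ the induction hypothesis lets me write $m_i\equiv\pi_{d(m_i)}(m_i)$, whence $m\equiv\pi_{d(m_1)}(m_1)\pi_{d(m_2)}(m_2)$; applying $\pi_g$ and then identity (i) collapses this to $m$ when $g=d(m_1)d(m_2)=d(m)$ and to $0$ otherwise.

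With this in hand the main induction is routine. If $w=x\in X$, identity (iii) rewrites $w\equiv\sum_{g\in S}\pi_g(x)$, a combination of elements of $M_1$. If $w=uv$ is a product, the induction hypothesis gives $u\equiv\sum_i c_i m_i$ and $v\equiv\sum_j c_j' m_j'$ with $m_i,m_j'\in M$, so $w\equiv\sum_{i,j}c_ic_j'\,m_im_j'$, and each $m_im_j'$ again lies in $M$ by the very definition of $M$ for $k>1$. Finally, if $w=\pi_g(u)$, the induction hypothesis expresses $u$ as a combination of $M$-monomials, and applying $\pi_g$ termwise reduces matters to evaluating $\pi_g(m)$ for $m\in M$, which the homogeneity claim handles.

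The step I expect to be the main obstacle is the product case of the homogeneity claim, because identity (i) is available only in the literal form $\pi_g(\pi_h(x)\pi_k(y))$, whereas I must apply $\pi_g$ to a product of two arbitrary normalized monomials $m_1,m_2$. The resolution, and the real content of the argument, is that $T$ is a T-ideal: one is entitled to substitute the arbitrary elements $m_1,m_2\in F_\Omega(X)$ for $x,y$ in (i), and the homogeneity hypothesis for $m_1,m_2$ supplies precisely the outer projections $\pi_{d(m_i)}$ needed to match the left-hand side of (i). Everything else is bookkeeping of degrees in the semigroup $G$.
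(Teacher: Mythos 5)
Your proposal is correct and follows essentially the same route as the paper: induction on the degree of a monomial, together with the auxiliary homogeneity fact that every $m\in M$ satisfies $\pi_g(m)\equiv m$ for the appropriate $g$ (and $\equiv 0$ otherwise), established via identities (i)--(iii) and the substitution property of the T-ideal $T$. The only difference is organizational --- you isolate the homogeneity claim as a separate structural induction, whereas the paper carries it along inside the single induction on degree --- which does not change the substance of the argument.
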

\begin{proof}

In other words, using transformations (i), (ii) and (iii), one can reduce any monomial $u\in W=W_\Omega(X)$ to a linear combination of monomials $w$ in $M$.  Actually, we also prove that $\pi_g(w)=w$ for an appropriate $g\in G$, for any $w\in M$. Let us call such monomials homogeneous.

Let us use induction by $\deg u$. A monomial of degree 0 is an $x\in X$. Using (iii), we can write $x=\sum_{g\in S}\pi_g(x)$, proving the claim. The monomials of degree 1 are $\pi_g x$, for some $g\in G$ and $x\in X$, and $xy$, for $x,y\in X$. The former ones are in $M_1$ by definition. The latter ones, using (iii), can be written as linear combinations of monomials $w=\pi_g(x)\pi_h(y)$. Again by definition, these are in $M$. In addition, by (i), $\pi_{gh}(w)=w$. Now assume $\deg u=k>1$. Then either $u=\pi_g(w)$, where $\deg w= k-1$ or $u=w_1w_2$, where $\deg w_i<k$ for $i=1,2$.  By induction, $w, w_1, w_2$ are linear combinations of monomials in $M$, that is, either $\pi_h(x)$, $h\in G$, $x\in X$, or $v_1v_2$, where $\pi_{g_1}(v_1)=v_1$, $\pi_{g_2}(v_2)=v_2$ are homogeneous monomials in $M$, $g_1,g_2\in G$. Then if $u=w_1w_2$, $u$ is a linear combination of monomials in $M$. Now $\pi_g(\pi_h(x))= \delta_{gh}\pi_g(x)$ by (i), 
\[\pi_g(v_1v_2)=\pi_g(\pi_{g_1}(v_1)\pi_{g_1}(v_1))=\delta_{g, g_1g_2}\pi_{g_1}(v_1)\pi_{g_1}(v_1)=\delta_{g, g_1g_2}v_1v_2.
\]
This proves that $u=\pi_g(w)$ is a linear combination of monomials in $M$ as well. Also, this latter calculation proves our claim that $\pi_{g_1g_2}(v_1v_2)=v_1v_2$.
\end{proof}

 Now we denote by $K\langle X^G\rangle$ the free non-associative $G$-graded algebra, with free generators $x^{(g)}$, where $x\in X$, $g\in G$. Consider the homomorphism $\bar{\psi}:K\langle X^G\rangle\to F_\Omega^A(X)$ extending the map
$\psi(x^{(g)})=\pi_g(x)$. By previous lemma, $\bar{\psi}$ is surjective.

\begin{lemma}\label{translation}
Let $f\in K\langle X^G\rangle$. Then $f=0$ is a graded identity of $A$ if and only if $\bar{\psi}(f)=0$ is a polynomial identity of $A$ as an $\Omega$-algebra.
\end{lemma}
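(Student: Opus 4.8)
The plan is to establish the two directions of the equivalence by carefully tracking how the translation map $\bar\psi$ interacts with evaluations. The essential point is that a graded identity of $A$ is, by definition, a relation $f(x_1^{(g_1)},\ldots)=0$ that holds whenever each graded variable $x_i^{(g_i)}$ is evaluated on a homogeneous element of $A$ of degree $g_i$, whereas a polynomial identity of $A$ as an $\Omega$-algebra is a relation that must hold under \emph{arbitrary} evaluations $X\to A$. So the heart of the matter is reconciling ``homogeneous-only'' evaluations with ``all'' evaluations, and the bridge is precisely the projection operators $\pi_g$, which force arbitrary inputs onto their homogeneous components.

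First I would prove the forward direction. Suppose $f=0$ is a graded identity of $A$. Given any $\Omega$-algebra evaluation $\varphi:X\to A$, I want to show $\bar\varphi(\bar\psi(f))=0$. The key observation is that $\bar\psi$ sends each free graded generator $x^{(g)}$ to $\pi_g(x)$, and under $\varphi$ the element $\pi_g(x)$ evaluates to $\pi_g(\varphi(x))$, which is exactly the homogeneous component of degree $g$ of the arbitrary element $\varphi(x)\in A$. Thus evaluating $\bar\psi(f)$ under the unrestricted map $\varphi$ is the same as evaluating the original graded polynomial $f$ on the homogeneous elements $\pi_{g}(\varphi(x))\in A_g$. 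Since $f=0$ is a graded identity, any such substitution of homogeneous elements yields $0$, so $\bar\varphi(\bar\psi(f))=0$. Because $\varphi$ was arbitrary, $\bar\psi(f)=0$ is a polynomial identity of the $\Omega$-algebra $A$.

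For the converse, suppose $\bar\psi(f)=0$ is a polynomial identity of $A$ as an $\Omega$-algebra. To verify that $f=0$ is a graded identity, I take an arbitrary assignment of homogeneous elements $a_i\in A_{g_i}$ to the graded variables $x_i^{(g_i)}$ and must show $f$ vanishes there. The idea is to choose an $\Omega$-evaluation $\varphi:X\to A$ that realizes these homogeneous substitutions: for each ungraded variable $x$ occurring, set $\varphi(x)$ to be the sum of the prescribed homogeneous pieces across the finitely many degrees attached to that variable in $f$. Then, using identity (ii) together with property $\pi_g(a_h)=\delta_{gh}a_h$, the operator $\pi_{g_i}$ picks out exactly the component $a_i$ from $\varphi(x_i)$, so that $\bar\varphi(\bar\psi(f))$ reproduces the graded evaluation $f(a_1,\ldots)$. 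Since $\bar\psi(f)=0$ holds under every $\Omega$-evaluation, in particular under this $\varphi$, we conclude $f(a_1,\ldots)=0$, as required.

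The main obstacle, and the point deserving the most care, is the bookkeeping in the converse direction when a single ungraded variable $x$ carries several distinct graded incarnations $x^{(g_1)},x^{(g_2)},\ldots$ inside $f$: one must define $\varphi(x)$ as a single element of $A$ whose various homogeneous components simultaneously supply all of the independently chosen substitutions for those graded variables, and then rely on the orthogonality of the projections (via identities (i) and (ii)) to guarantee that $\pi_{g_j}$ extracts precisely the intended component without cross-contamination from the others. Verifying that this simultaneous realization is consistent, and that no unintended terms survive, is where the real content lies; the remaining manipulations are a routine application of Lemma~\ref{pol_gen} and the defining identities of the $\Omega$-algebra structure.
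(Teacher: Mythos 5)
Your proof is correct and is essentially the paper's argument: both rest on the same correspondence between graded evaluations $x^{(g)}\mapsto a^{(g)}$ and ungraded evaluations $x\mapsto\sum_g a^{(g)}$, with $\pi_g$ recovering the homogeneous components, and on the induction showing $e'(\bar\psi(w))=e(w)$ for monomials. The only cosmetic difference is that the paper phrases both directions contrapositively (exhibiting a non-vanishing evaluation) while you argue directly.
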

\begin{proof}
Assume $f=0$ is not a $G$-graded identity of $A$. Then there exists an evaluation
\begin{align*}
e:x_i^{(g)}\mapsto a_i^{(g)}\in A_g,
\end{align*}
such that $e(f)\ne0$. Define the evaluation $e':x_i\mapsto\sum_{g\in G}a_i^{(g)}$ (since the grading is finite, the sum is well defined). Let $w=w(x_1^{(g_1)},\ld,x_n^{(g_n)})\in K\langle X^G\rangle$ be a monomial. Then $\bar\psi(w)=w(\pi_{g_1}(x_1),\ld,\pi_{g_n}(x_n))$. An easy induction by $\deg w$ proves that $e'(\bar\psi(w))=e(w)$, hence we obtain $e'(\bar\psi(f))=e(f)\ne0$, proving that $\bar\psi(f)$ is not a polynomial identity of $A$, as an $\Omega$-algebra.

Conversely, if $\bar\psi(f)$ is not a polynomial identity of $A$, as an $\Omega$-algebra, then there exists an evaluation $e_2':x_i\mapsto\sum b_i^{(g)}$ such that $e_2'(\bar\psi(f))\ne0$. So we can define the evaluation $e_2:x_i^{(g)}\mapsto b_i^{(g)}$, and the same argument shows that $e_2(f)=e_2'(\bar\psi(f))\ne0$, concluding that $f$ is not a $G$-graded polynomial identity of $A$.
\end{proof}

An immediate consequence is the following
\begin{corollary}\label{same_id}
Let $A$ and $B$ be two algebras endowed with finite $G$-gradings. Then $A$ and $B$ satisfy the same graded polynomial identities if and only if $A$ and $B$ satisfy the same polynomial identities as $\Omega$-algebras.
\end{corollary}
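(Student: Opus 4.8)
The plan is to obtain the statement as a direct consequence of Lemma~\ref{translation}, the one additional ingredient being the decomposition of $F_\Omega(X)$ supplied by Lemma~\ref{pol_gen}. Let me write $\Psi\colon K\langle X^G\rangle\to F_\Omega(X)$ for the homomorphism extending $x^{(g)}\mapsto\pi_g(x)$ (with respect to the multiplications); composing it with the canonical projection $F_\Omega(X)\to F_\Omega^A(X)=F_\Omega(X)/T(A)$ recovers the map $\bar\psi$ of the excerpt, so that ``$\bar\psi(f)=0$'' means exactly ``$\Psi(f)\in T(A)$''. Since $\Psi$ is linear and sends each graded monomial to the $\Omega$-monomial obtained by replacing every $x^{(g)}$ with $\pi_g(x)$, its image is precisely $\Span M$. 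Writing $T_{\mathrm{gr}}(A)$ for the T-ideal of graded identities of $A$, Lemma~\ref{translation} then reads: $f\in T_{\mathrm{gr}}(A)$ if and only if $\Psi(f)\in T(A)$, for every $f\in K\langle X^G\rangle$.

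One implication is immediate. If $A$ and $B$ have the same $\Omega$-identities, i.e.\ $T(A)=T(B)$, then for every $f\in K\langle X^G\rangle$ Lemma~\ref{translation} gives $f\in T_{\mathrm{gr}}(A)\Leftrightarrow\Psi(f)\in T(A)=T(B)\Leftrightarrow f\in T_{\mathrm{gr}}(B)$, so the graded identities coincide. No decomposition is needed here, because Lemma~\ref{translation} already accounts for every element of $K\langle X^G\rangle$.

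The substance is in the converse. Assume $T_{\mathrm{gr}}(A)=T_{\mathrm{gr}}(B)$. First I would note that this forces $\supp A=\supp B=:S$, since $g\in\supp A$ precisely when the generator $x^{(g)}$ fails to be a graded identity of $A$. With a common support, $A$ and $B$ both satisfy identities (i), (ii) and (iii) relative to $S$, so the T-ideal $T$ they generate lies in $T(A)\cap T(B)$, and Lemma~\ref{pol_gen} yields $F_\Omega(X)=\Span M+T=\im\Psi+T$. Now take any $F\in T(A)$ and write $F=\Psi(f)+t$ with $t\in T$. Then $\Psi(f)=F-t\in T(A)$, hence $f\in T_{\mathrm{gr}}(A)=T_{\mathrm{gr}}(B)$ by Lemma~\ref{translation}, and applying the lemma to $B$ gives $\Psi(f)\in T(B)$; since $t\in T\subseteq T(B)$ we conclude $F\in T(B)$. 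Thus $T(A)\subseteq T(B)$, and symmetry gives equality.

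The only real obstacle is the gap bridged in the last paragraph: Lemma~\ref{translation} is a statement about the polynomials in $\im\Psi$, whereas the desired conclusion concerns arbitrary $\Omega$-polynomials in $F_\Omega(X)$. The reduction $F_\Omega(X)=\im\Psi+T$ from Lemma~\ref{pol_gen}, combined with the fact that the defining relations $T$ become common identities of $A$ and $B$ once their supports are seen to agree, is exactly what closes this gap.
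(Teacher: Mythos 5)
Your argument is correct and follows exactly the route the paper intends: the paper states this corollary as an ``immediate consequence'' of Lemma~\ref{translation} without writing out a proof, and the ingredients you use (surjectivity of $\bar\psi$, i.e.\ the decomposition $F_\Omega(X)=\Span M+T$ from Lemma~\ref{pol_gen}, together with $T\subseteq T(A)\cap T(B)$) are precisely what the paper sets up in the two preceding lemmas. Your observation that one must first check $\Supp A=\Supp B$ (via the generators $x^{(g)}$) so that the identities (iii) and hence the T-ideal $T$ are common to both algebras is a genuine point that the paper glosses over, and you handle it correctly.
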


Now, using Proposition \ref{ideals} and Corollary \ref{same_id}, applying Razmyslov's Theorem  \ref{Razmyslov} and concluding with Proposition \ref{morphism}, we obtain our main result.

\begin{theorem}\label{grad_raz}
Let $A$ and $B$ be finite-dimensional $G$-graded algebras which are graded simple over an algebraically closed field $K$, where $G$ is any semigroup. Then $A$ and $B$ are isomorphic as $G$-graded algebras if and only if they satisfy the same $G$-graded polynomial identities.
\end{theorem}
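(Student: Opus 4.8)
The plan is to reduce the statement entirely to Razmyslov's Theorem \ref{Razmyslov} by passing through the dictionary between $G$-graded algebras and $\Omega$-algebras that has been set up in Propositions \ref{ideals} and \ref{morphism} and in Corollary \ref{same_id}. The forward implication is the routine direction: if $\vp\colon A\to B$ is an isomorphism of $G$-graded algebras, then by Proposition \ref{morphism} it is an isomorphism of $\Omega$-algebras, and isomorphic $\Omega$-algebras plainly satisfy the same $\Omega$-identities; since by Corollary \ref{same_id} agreement of $\Omega$-identities is equivalent to agreement of $G$-graded identities, $A$ and $B$ share all their graded polynomial identities. (Equivalently, one checks directly that a graded isomorphism carries any graded identity of $A$ to one of $B$ and back.)

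For the substantial converse, suppose $A$ and $B$ satisfy the same $G$-graded polynomial identities. First I would regard both as $\Omega$-algebras for the signature $\Omega=\Omega_1\cup\Omega_2$ of Section 2, noting that $\Omega_2=\{\mu\}$ supplies a binary operation, so $\cup_{m=2}^\infty\Omega_m\ne\emptyset$ and Razmyslov's standing hypothesis is met. Because $A$ and $B$ are finite-dimensional, the associated $\Omega$-algebras are finite-dimensional on the same underlying spaces, their supports are finite, and identities (i), (ii), (iii) together with the translation results of Section 2 apply. By Corollary \ref{same_id}, the hypothesis that $A$ and $B$ share every $G$-graded identity is then exactly equivalent to their satisfying the same polynomial identities as $\Omega$-algebras. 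Invoking graded-simplicity next, Proposition \ref{ideals} gives that $A$ is graded simple if and only if it is simple as an $\Omega$-algebra, and likewise for $B$; hence $A$ and $B$ are finite-dimensional simple $\Omega$-algebras over the algebraically closed field $K$. Razmyslov's Theorem \ref{Razmyslov} now produces an isomorphism $A\cong B$ of $\Omega$-algebras, which Proposition \ref{morphism} converts back into an isomorphism of $G$-graded algebras, completing the argument.

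I do not expect a genuine obstacle at this stage: the full weight of the theorem has been absorbed into the four cited results, and in particular into Razmyslov's Theorem. The only points demanding care are bookkeeping ones, namely verifying that the $\Omega$-algebra attached to $A$ really satisfies every hypothesis of Theorem \ref{Razmyslov} — finite dimension, algebraically closed base field, a nontrivial operation of arity at least two, and $\Omega$-simplicity, each immediate from the construction — and making sure the reduction through Corollary \ref{same_id} is used as the genuine equivalence it is, so that both directions of the biconditional are actually obtained.
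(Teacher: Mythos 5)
Your proposal is correct and follows exactly the route the paper takes: Corollary \ref{same_id} to convert equality of graded identities into equality of $\Omega$-identities, Proposition \ref{ideals} to convert graded simplicity into $\Omega$-simplicity, Razmyslov's Theorem \ref{Razmyslov} to get an $\Omega$-isomorphism, and Proposition \ref{morphism} to convert it back; the paper states this in a single sentence. Your extra checks (finiteness of the support, the standing hypothesis $\cup_{m\ge2}\Omega_m\ne\emptyset$) are sensible bookkeeping the paper leaves implicit.
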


\section{Graded $\Omega$-algebras}

Let $A$ be an $\Omega$-algebra and let $G$ be any semigroup. Consider a vector space $G$-grading on $A$, that is, we fix a vector space decomposition $A=\oplus_{g\in G}A_g$. For any $m\in\mathbb{N}$, one obtains naturally a $G$-grading on
\begin{align*}
\otimes^mA:=\underbrace{A\otimes\cdots\otimes A}_\text{$m$ times}
\end{align*}
defining the homogeneous component of degree $g$ by
\begin{align*}
\left(\otimes^mA\right)_g=\sum_{g_1g_2\cdots g_m=g}A_{g_1}\otimes\cdots\otimes A_{g_m},
\end{align*}
see \cite[chapter 1, p. 11]{EK2013}. We say that $A$ is a $G$-graded $\Omega$-algebra if every $m$-ary operation $\omega:\otimes^mA\to A$ is $G$-homogeneous, that is, $\omega\left(\left(\otimes^mA\right)_g\right)\subset A_g$. This notion generalizes the common notion of a semigroup grading on an algebra.

We can turn any $G$-graded $\Omega$-algebra $A$ into an $\Omega_G$-algebra with signature $\Omega_G=\Omega\cup\{\pi_g\mid g\in G\}$, as we did earlier in this paper. This gives a full faithful functor from the category of $G$-graded $\Omega$-algebras   to the category of $\Omega_G$-algebras.

Similarly to the previous case, we obtain
\begin{lemma}
Let $A$ and $B$ be two $G$-graded $\Omega$-algebras.
\begin{enumerate}
\item $A$ is simple as $\Omega_G$-algebra if and only if $A$ is simple as $G$-graded $\Omega$-algebra.
\item $A$ is isomorphic to $B$ as $\Omega_G$-algebras if and only if $A$ is isomorphic to $B$ as $G$-graded $\Omega$-algebras.
\end{enumerate}
\end{lemma}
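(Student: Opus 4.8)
The plan is to mirror, in the $\Omega_G$-setting, the structural dictionary already established for the purely associative case in Propositions~\ref{ideals} and~\ref{morphism}. The passage from a $G$-graded $\Omega$-algebra $A$ to its associated $\Omega_G$-algebra adds precisely the projection operations $\pi_g$ to the signature, so the key observation is that the $\Omega_G$-subalgebras of $A$ are exactly the $G$-graded $\Omega$-subalgebras, and likewise for ideals. First I would verify that a subspace $I\subseteq A$ closed under all $\pi_g$ is automatically $G$-graded: applying $\pi_g$ to an element $a=\sum_h a_h\in I$ returns the homogeneous component $a_g\in I$, so $I=\bigoplus_g(I\cap A_g)$. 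Conversely, a $G$-graded subspace is visibly stable under each $\pi_g$. This single fact, combined with the hypothesis that each $\omega\in\Omega$ is $G$-homogeneous, shows that $\Omega_G$-stability and ($G$-graded $\Omega$)-stability coincide.

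For part (1), I would argue as follows. Suppose $A$ is simple as a $G$-graded $\Omega$-algebra and let $I$ be a nonzero $\Omega_G$-ideal. By the observation above, $I$ is a $G$-graded $\Omega$-ideal of $A$, hence $I=A$ by graded simplicity; thus $A$ is $\Omega_G$-simple. Conversely, if $A$ is $\Omega_G$-simple and $J$ is a nonzero $G$-graded $\Omega$-ideal, then $J$ is stable under every $\pi_g$ (being graded) and under $\mu$ and every $\omega\in\Omega$ (being an $\Omega$-ideal), so $J$ is a nonzero $\Omega_G$-ideal and therefore $J=A$. Here one must be a little careful about the convention for what a nonzero graded ideal means and whether the grading is finite, so that the projections $\pi_g$ act as honest finite sums; I would invoke the standing finiteness assumption on $\supp A$ exactly as in the associative case to guarantee that the identity $x=\sum_{g\in S}\pi_g(x)$ holds and that the $\pi_g$ are well defined operations on $A$.

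For part (2), the heart is that a linear map $\vp\colon A\to B$ is a homomorphism of $G$-graded $\Omega$-algebras if and only if it is a homomorphism of $\Omega_G$-algebras. A graded $\Omega$-homomorphism respects each $\omega$ and sends $A_g$ into $B_g$; the latter is equivalent to $\vp\circ\pi_g^A=\pi_g^B\circ\vp$ for all $g$, which is precisely the requirement that $\vp$ respect the unary operations $\pi_g$. Conversely, an $\Omega_G$-homomorphism commutes with all $\pi_g$, and commuting with the projections forces $\vp(A_g)\subseteq B_g$, so $\vp$ is degree-preserving and respects the $\Omega$-operations, i.e.\ it is a graded $\Omega$-homomorphism. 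Applying this equivalence to $\vp$ and, in the isomorphism case, to $\vp^{-1}$ yields that graded $\Omega$-isomorphisms and $\Omega_G$-isomorphisms coincide.

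The main obstacle I anticipate is not conceptual but bookkeeping: one must check that the defining analogues of identities (i), (ii), (iii) still hold once arbitrary $G$-homogeneous operations $\omega\in\Omega$ are present, in particular that $\pi_g$ interacts correctly with each $\omega$ via the homogeneity condition $\omega((\otimes^mA)_g)\subseteq A_g$. Concretely, one needs the analogue of identity~(i) in the form $\pi_g\bigl(\omega(\pi_{g_1}(x_1),\dots,\pi_{g_m}(x_m))\bigr)$ equal to $\omega(\pi_{g_1}(x_1),\dots,\pi_{g_m}(x_m))$ when $g=g_1\cdots g_m$ and equal to $0$ otherwise. This follows directly from the $G$-homogeneity of $\omega$, but it is the one place where the graded structure of $\Omega$ genuinely enters, and I would state it explicitly before deducing the two equivalences above.
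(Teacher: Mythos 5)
Your argument is correct and is essentially the proof the paper has in mind: the lemma is stated there without proof as the analogue of Propositions~\ref{ideals} and~\ref{morphism}, and your dictionary (a subspace is $\pi_g$-stable for all $g$ iff it is graded; a linear map commutes with all $\pi_g$ iff it is degree-preserving, with the homogeneity of each $\omega$ supplying the analogue of identity (iii)) is exactly what makes it immediate. One minor point: the finiteness of $\supp A$ is not actually needed for the operations $\pi_g$ to be well defined---every element of a direct sum has only finitely many nonzero components---it is only needed so that $x=\sum_{g\in S}\pi_g(x)$ is a legitimate polynomial identity, which plays no role in this lemma.
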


Denote by $F_{\Omega_G}$ the free $\Omega_G$-algebra with a set $X$ of free generators. The following are polynomial identities of a given $G$-graded $\Omega$-algebra:
\begin{enumerate}
\renewcommand{\labelenumi}{(\roman{enumi})}
\item $\pi_{g}(\pi_{h}(x))=0$, if $g\ne h$,
\item $\pi_g(\pi_g(x))=\pi_g(x)$,
\item $\omega(\pi_{g_1}x_1,\ldots,\pi_{g_n}x_n)=\pi_g(\omega(\pi_{g_1}x_1,\ldots,\pi_{g_n}x_n))$, where $g=g_1\cdots g_n$, $\omega\in\Omega$.
\end{enumerate}
Moreover, if the $G$-grading is finite, then the following is also an identity:
\begin{enumerate}
\renewcommand{\labelenumi}{(\roman{enumi})}
\setcounter{enumi}{3}
\item $x=\sum_{g\in G}\pi_g x$ and $\pi_h(x)=0$, if $h\notin\text{Supp}\,A$.
\end{enumerate}

Let $T$ be the T-ideal generated by the identities (i)--(iv). Let $Y=\{\pi_g(x)\mid x\in X,g\in G\}$. Consider the set of monomials $M=W_\Omega(Y)$, which consists of all monomials generated by $Y$ using the operations of $\Omega$.

\begin{lemma}
The sets $M$ and $T$ span $F_{\Omega_G}$.
\end{lemma}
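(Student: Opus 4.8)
The plan is to mirror the argument of Lemma~\ref{pol_gen}: writing $\equiv$ for congruence modulo $T$, I would prove by induction on the degree of a monomial $u\in W_{\Omega_G}(X)$ that $u$ is $\equiv$ to a linear combination of monomials in $M=W_\Omega(Y)$. Since $F_{\Omega_G}$ is spanned by $W_{\Omega_G}(X)$, this yields $F_{\Omega_G}=\Span(M)+T$, which is the assertion.

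First I would establish an auxiliary homogeneity claim: every $w\in M$ satisfies $\pi_g(w)\equiv w$ for a unique $g\in G$, which I call $\deg w$, while $\pi_{g'}(w)\equiv 0$ for all $g'\ne g$. This is proved by a subinduction on the construction of $w\in W_\Omega(Y)$. The generators $\pi_g(x)\in Y$ are homogeneous of degree $g$ by (ii) and (i). If $w=\omega(w_1,\ldots,w_n)$ with each $w_i$ homogeneous of degree $g_i$, then substituting $x_i\mapsto w_i$ in (iii), using that $T$ is a T-ideal and that $w_i\equiv\pi_{g_i}(w_i)$ by (ii), yields $w\equiv\pi_g(w)$ with $g=g_1\cdots g_n$; applying (i) to $\pi_{g'}(\pi_g(w))$ then gives $\pi_{g'}(w)\equiv 0$ for $g'\ne g$.

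For the main induction, the base case $u=x\in X$ is handled by (iv), which gives $x\equiv\sum_{g\in\Supp A}\pi_g(x)$, a sum of generators in $Y\subset M$. For the inductive step, write $u=\omega(v_1,\ldots,v_m)$ with $\omega\in\Omega_G$ and each $v_i$ of strictly smaller degree, so that by induction $v_i\equiv\sum_j c_{ij}w_{ij}$ with each $w_{ij}\in M$ homogeneous. If $\omega=\pi_g$ (so $m=1$), then, $T$ being an ideal, the homogeneity claim makes each $\pi_g(w_{1j})$ equal to $w_{1j}$ or to $0$ modulo $T$ according to whether $\deg w_{1j}=g$, so $u$ reduces into $\Span(M)+T$. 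If $\omega\in\Omega$, then $\omega$ descends to a multilinear operation on $F_{\Omega_G}/T$, and the congruences for the $v_i$ expand to $u\equiv\sum c_{1j_1}\cdots c_{mj_m}\,\omega(w_{1j_1},\ldots,w_{mj_m})$; each term lies in $M$ since $M=W_\Omega(Y)$ is closed under the $\Omega$-operations.

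The step I expect to be the main obstacle is the homogeneity claim together with the careful use of (iii): identity (iii) is stated only for arguments of the literal form $\pi_{g_i}x_i$, so before it can be applied to $\omega(w_1,\ldots,w_n)$ one must first rewrite each homogeneous $w_i$ as $\pi_{g_i}(w_i)$ via (ii) and invoke the T-ideal property. Once homogeneity is secured, the two cases of the inductive step are essentially bookkeeping, the only remaining point needing care being that the operations $\omega\in\Omega$ act multilinearly on $F_{\Omega_G}$, so that the expansion in the case $\omega\in\Omega$ is legitimate and the $T$-terms can be collected.
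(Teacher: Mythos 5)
Your proposal is correct and follows essentially the same route as the paper: an induction on the degree of a monomial, with the homogeneity statement $\pi_g(w)\equiv w$ for $w\in M$ carried along (the paper runs it as part (b) of a simultaneous induction, you as a separate subinduction on the structure of $W_\Omega(Y)$ --- a purely organizational difference), the base case handled by (iv), and the inductive step split into the cases $\omega\in\Omega$ and $\omega=\pi_g$ exactly as in the paper. Your extra care about applying (iii) only after rewriting $w_i$ as $\pi_{g_i}(w_i)$ via the T-ideal property is a point the paper passes over silently, but it does not change the argument.
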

\begin{proof}
Let $w\in F_{\Omega_G}$ be a monomial. We work modulo $T$. We prove by induction on the degree of $w$ that
\begin{enumerate}
\renewcommand{\labelenumi}{(\alph{enumi})}
\item $w$ is a linear combination of monomials in $M$,
\item if $w\in M$, then there exists $g\in G$ such that $\pi_g(w)=w$.
\end{enumerate}
If $\deg w=0$, then we use identities (iv) and (ii) to conclude (a) and (b). So assume $\deg w>0$. Then we can write $w=\omega(w_1,\ldots,w_m)$, where $m\ge 1$, and $\deg w_i<\deg w$ for all $i$; or $w=\pi_g(u)$, for some $g\in G$ and $\deg u=\deg w-1$. For the first case, by induction step, every $w_i$ is a linear combination of monomials in $M$. Hence, $w$ is a linear combination of monomials in $M$. In addition, suppose $w_i\in M$ and $\pi_{g_i}w_i=w_i$, for some $g_i\in G$, for each $i$. Then identity (iii) implies that $w$ satisfies (b).

For the last case $w=\pi_g(u)$, we can apply induction step on $u$. Thus $u$ is a linear combination of monomials in $M$, say $u=\sum u_i$. Induction hypothesis says that $\pi_{h_i}u_i=u_i$, for some $h_i\in G$. By identities (i)--(ii), one has $\pi_g\pi_{h_i} u_i=\delta_{gh_i}u_i$. Hence, $w$ satisfies (a). If it happens that $w\in M$, then $u\in M$, and the last computation implies that $w$ satisfies (b) as well.
\end{proof}

Let $X^G=\{x^{(g)}\mid x\in X,g\in G\}$. For each $\omega_0\in\Omega_0$, associate an arbitrary homogeneous degree to it, $\deg_G\omega_0\in G$. Also, write $\deg_G x^{(g)}=g$. Then $F_\Omega^\text{gr}$, the free $\Omega$-algebra with the basis $X^G$, induces a $G$-grading as follows. We already defined the homogeneous degree for the elements of degree 0. Now, given $\omega(w_1,\ldots,w_n)$, where $\omega\in\Omega$, $n\ge1$, we set $\deg_G\omega(w_1,\ldots,w_n)=\deg_G w_1\cdots\deg_G w_n$. This is a well defined $G$-grading on $F_\Omega$.

There is no doubt that we should not exclude $0$-ary operations in the free $\Omega$-algebra since polynomial identities with 1 or without 1 play an essential role in the theory. However, it is interesting to mention one example. In the context of associative algebra $A$ with a unit (where $1$ is a 0-ary operation), it is not possible to find a graded homomorphism $F_\Omega\to A$, unless we imposed $\deg_G 1=1\in G$ in $\Omega_0$.

We can then consider the $G$-graded evaluations and speak about the $G$-graded polynomial identities of $G$-graded $\Omega$-algebras. Let $F_{\Omega}^\text{gr}(A)$ be a relatively free $G$-graded $\Omega$-algebra. Then the same argument as in the previous section can be used to conclude that

\begin{lemma}
The map $\psi_G:F_{\Omega}^\text{gr}(A)\to F_{\Omega_G}(A)$, given by $\psi_G(x_i^{(g)})=\pi_g(x_i)$ is a bijective homomorphism of $\Omega$-algebras. Moreover, $\psi_G(\text{Id}_\Omega^\text{gr}(A))=\text{Id}_{\Omega_G}(A)$.
\end{lemma}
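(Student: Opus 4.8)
The plan is to transcribe the argument of Section~2 almost verbatim, the only genuinely new features being that the single product $\mu$ is replaced by an arbitrary operation $\omega\in\Omega$ and that the constants in $\Omega_0$ must be handled. I read the statement as asserting that the free-level homomorphism $\psi_G\colon F_\Omega^\text{gr}\to F_{\Omega_G}$, $x_i^{(g)}\mapsto\pi_g(x_i)$, descends to a bijection $F_\Omega^\text{gr}(A)\to F_{\Omega_G}(A)$ between the relatively free algebras, and that $\psi_G(\text{Id}_\Omega^\text{gr}(A))=\text{Id}_{\Omega_G}(A)$. First I would note that $\psi_G$ is a well-defined homomorphism of $\Omega$-algebras: viewing $F_\Omega^\text{gr}$ merely as the free $\Omega$-algebra on $X^G$, the universal property extends $x_i^{(g)}\mapsto\pi_g(x_i)$ uniquely to an $\Omega$-homomorphism, which by construction commutes with every $\omega\in\Omega$.

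For surjectivity I would invoke the preceding spanning lemma. Since $\psi_G$ carries $X^G$ onto $Y=\{\pi_g(x)\mid x\in X,\,g\in G\}$ and commutes with the operations of $\Omega$, its image is exactly the $\Omega$-subalgebra generated by $Y$, i.e. $\Span M$ with $M=W_\Omega(Y)$. As $M$ and $T$ span $F_{\Omega_G}$, and $T\subseteq\text{Id}_{\Omega_G}(A)$ because the defining identities (i)--(iv) hold in $A$, the composite $F_\Omega^\text{gr}\to F_{\Omega_G}\to F_{\Omega_G}(A)$ is onto.

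The heart of the proof is the translation of identities, mirroring Lemma~\ref{translation}. Given a graded evaluation $e\colon x_i^{(g)}\mapsto a_i^{(g)}\in A_g$, I would set $e'\colon x_i\mapsto\sum_g a_i^{(g)}$ (a finite sum, the grading being finite) and prove by induction on $\deg w$ that $e'(\psi_G(w))=e(w)$ for every monomial $w\in F_\Omega^\text{gr}$; conversely any $\Omega_G$-evaluation $e_2'\colon x_i\mapsto b_i$ splits as $b_i=\sum_g\pi_g(b_i)$ and yields the graded evaluation $e_2\colon x_i^{(g)}\mapsto\pi_g(b_i)$ obeying the same relation. The base case uses that inside $A$ each $\pi_g$ is literally the projection onto $A_g$, so $\pi_g(e'(x_i))=a_i^{(g)}=e(x_i^{(g)})$ (and the constants of $\Omega_0$, once a degree $\deg_G\omega_0$ is fixed, are matched directly); the inductive step passes through each $\omega\in\Omega$ via $e'(\psi_G(\omega(w_1,\ldots,w_m)))=\omega(e'(\psi_G(w_1)),\ldots,e'(\psi_G(w_m)))$. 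This yields that $f$ is a graded identity of $A$ if and only if $\psi_G(f)$ is an $\Omega_G$-identity of $A$, that is, $\psi_G^{-1}(\text{Id}_{\Omega_G}(A))=\text{Id}_\Omega^\text{gr}(A)$. Consequently the kernel of $F_\Omega^\text{gr}\to F_{\Omega_G}(A)$ is precisely $\text{Id}_\Omega^\text{gr}(A)$, so the induced map $F_\Omega^\text{gr}(A)\to F_{\Omega_G}(A)$ is injective; with the surjectivity above it is bijective, and surjectivity together with the preimage identity gives the ``moreover'' clause $\psi_G(\text{Id}_\Omega^\text{gr}(A))=\text{Id}_{\Omega_G}(A)$.

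The step I expect to require the most care is the inductive translation in the presence of arbitrary operations. What makes it go through is the defining identity (iii), $\omega(\pi_{g_1}x_1,\ldots,\pi_{g_n}x_n)=\pi_{g_1\cdots g_n}(\omega(\pi_{g_1}x_1,\ldots,\pi_{g_n}x_n))$, which ensures that $\psi_G$ sends a $G$-homogeneous element of degree $g$ to an element fixed by $\pi_g$; this is what aligns the grading of $F_\Omega^\text{gr}$, where $\deg_G\omega(w_1,\ldots,w_n)=\deg_G w_1\cdots\deg_G w_n$, with the $\pi_g$-structure of $F_{\Omega_G}$, generalizing the computation $\pi_{g_1g_2}(v_1v_2)=v_1v_2$ of Lemma~\ref{pol_gen}. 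Accounting honestly for $\Omega_0$, so that the chosen degrees $\deg_G\omega_0$ are compatible with evaluation in $A$ (as flagged in the remark on units), is the only real bookkeeping absent from Section~2; everything else is a routine transcription.
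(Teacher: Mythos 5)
Your proposal is correct and follows exactly the route the paper intends: the paper gives no separate proof of this lemma, stating only that ``the same argument as in the previous section can be used,'' and your write-up is a faithful execution of that transcription (spanning lemma for surjectivity, the evaluation-translation induction of Lemma~\ref{translation} for the identity correspondence and injectivity). Your extra care with the constants in $\Omega_0$ and with identity (iii) replacing the computation $\pi_{g_1g_2}(v_1v_2)=v_1v_2$ is precisely the bookkeeping the paper leaves implicit.
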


Hence $A$ and $B$ satisfy the same $G$-graded polynomial identities as $\Omega$-algebras if and only if they satisfy the same polynomial identities as $\Omega_G$-algebras. As a consequence, we can apply Razmyslov's Theorem in the setting of $G$-graded $\Omega$-algebras.
\begin{corollary}\label{Universal_grad}
Let $A$ and $B$ be two finite-dimensional $G$-graded $\Omega$-algebras over an algebraically closed field, which are simple as $G$-graded $\Omega$-algebra, where $G$ is any semigroup. Then $A$ is isomorphic to $B$, as a $G$-graded $\Omega$-algebra, if and only if they satisfy the same $G$-graded polynomial identities as $G$-graded $\Omega$-algebras.
\end{corollary}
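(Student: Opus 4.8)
The plan is to reduce the statement to Razmyslov's Theorem \ref{Razmyslov} by passing to the associated $\Omega_G$-algebras, in exact parallel with the way Theorem \ref{grad_raz} was deduced in the previous section. The forward implication requires no work: an isomorphism of $G$-graded $\Omega$-algebras carries any graded evaluation to a graded evaluation, so two isomorphic algebras automatically satisfy the same $G$-graded polynomial identities. All of the content lies in the converse, and there the strategy is to transport the hypotheses along the full faithful functor into the category of $\Omega_G$-algebras, apply Razmyslov's Theorem there, and transport the conclusion back.

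For the converse, I would first view $A$ and $B$ as $\Omega_G$-algebras via that functor, observing that the underlying vector spaces — and hence the dimensions — are unchanged, so both remain finite-dimensional over the same algebraically closed field $K$. Next I would translate the hypothesis: by the last Lemma of this section, together with the observation immediately following it, $A$ and $B$ satisfy the same $G$-graded identities as $G$-graded $\Omega$-algebras if and only if they satisfy the same polynomial identities as $\Omega_G$-algebras. Similarly, by part (1) of the first Lemma of this section, graded simplicity of $A$ (respectively $B$) is equivalent to its simplicity as an $\Omega_G$-algebra.

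With these two translations in place, $A$ and $B$ are finite-dimensional simple $\Omega_G$-algebras over an algebraically closed field satisfying the same polynomial identities, so Razmyslov's Theorem \ref{Razmyslov} produces an isomorphism $A\cong B$ of $\Omega_G$-algebras. Finally, part (2) of the first Lemma of this section converts this into an isomorphism of $A$ and $B$ as $G$-graded $\Omega$-algebras, which completes the argument.

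The one point that genuinely needs checking — the main obstacle, such as it is in a corollary of this type — is the applicability of Razmyslov's Theorem to the enlarged signature $\Omega_G$. His result is stated under the standing assumption $\bigcup_{m\ge 2}\Omega_m\neq\emptyset$, and since $\Omega_G$ is obtained from $\Omega$ by adjoining only the unary projections $\pi_g$, one has $\bigcup_{m\ge 2}(\Omega_G)_m=\bigcup_{m\ge 2}\Omega_m\neq\emptyset$, so the hypothesis persists. One should also confirm that the support of the grading is finite (here guaranteed by finite-dimensionality), so that identity (iv) is valid and the map $\psi_G$ is a well-defined bijection; granting this, the corollary is a formal consequence of the three preceding lemmas together with Razmyslov's Theorem.
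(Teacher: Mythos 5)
Your proposal is correct and follows essentially the same route as the paper: translate graded identities, graded simplicity, and graded isomorphism into the $\Omega_G$-setting via the two lemmas of this section, apply Razmyslov's Theorem there, and transport the isomorphism back. Your explicit check that $\bigcup_{m\ge 2}(\Omega_G)_m=\bigcup_{m\ge 2}\Omega_m\neq\emptyset$ is a welcome detail the paper leaves implicit.
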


Applying \cite[Corollary 2 of Theorem 5.3]{R1994} instead of \cite[Corollary 1 of Theorem 5.3]{R1994}, we obtain the same statement of the previous lemma but for prime algebras.
\begin{corollary}\label{Universal_prime_grad}
Let $A$ and $B$ be two finite-dimensional $G$-graded $\Omega$-algebras over an algebraically closed field, which are \emph{prime} as $G$-graded $\Omega$-algebra, where $G$ is any semigroup. Then $A$ is isomorphic to $B$, as a $G$-graded $\Omega$-algebra, if and only if they satisfy the same $G$-graded polynomial identities as $G$-graded $\Omega$-algebras.
\end{corollary}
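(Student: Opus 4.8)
The plan is to reproduce, line for line, the reduction that yielded Corollary~\ref{Universal_grad}, and at the final step to invoke the prime version of Razmyslov's theorem, \cite[Corollary 2 of Theorem 5.3]{R1994}, in place of the simple version, Theorem~\ref{Razmyslov}. The forward implication is trivial: isomorphic $G$-graded $\Omega$-algebras obviously satisfy the same $G$-graded polynomial identities. So all of the work lies in the converse, and everything there is already in place except for one lemma that must be restated in its prime form.

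That missing ingredient is the prime analogue of part~(1) of the first lemma of this section: a $G$-graded $\Omega$-algebra $A$ is prime as a $G$-graded $\Omega$-algebra if and only if $A$ is prime as an $\Omega_G$-algebra. I would prove this exactly as the simplicity statement is proved, through the ideal correspondence. Passing to the $\Omega_G$-algebra replaces the grading by the unary projections $\pi_g$, and---just as in Proposition~\ref{ideals}---a subspace of $A$ is stable under all the $\pi_g$ precisely when it is a graded subspace; hence the $\Omega_G$-ideals of $A$ are exactly its $G$-graded ideals. Since the operations of $\Omega$ act identically in the two structures, the product of two ideals induced by those operations is the same on both sides (the unary $\pi_g$ contribute nothing to a product of two ideals). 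Therefore the primeness condition ``$I\cdot J = 0$ forces $I = 0$ or $J = 0$'' is the very same statement whether $I,J$ range over graded ideals or over $\Omega_G$-ideals, which gives the equivalence.

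With this equivalence in hand, the proof assembles at once. The lemma on $\psi_G$ shows that $A$ and $B$ have the same $G$-graded identities as $G$-graded $\Omega$-algebras if and only if they have the same ordinary identities as $\Omega_G$-algebras; both algebras are finite-dimensional and, by the previous paragraph, prime as $\Omega_G$-algebras; so \cite[Corollary 2 of Theorem 5.3]{R1994} produces an isomorphism $A\cong B$ of $\Omega_G$-algebras over the algebraically closed base field, and part~(2) of the first lemma of this section converts it into an isomorphism of $G$-graded $\Omega$-algebras. The one point I expect to require genuine care is the primeness-preservation step, and within it the precise meaning of ``prime'' for an $\Omega$-algebra of arbitrary signature: one must fix the definition of the product of two ideals in terms of the operations of $\Omega$ and check that the unary operations $\pi_g$ neither create new such products nor destroy existing ones, so that the ideal bijection really does transport primeness. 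Once the definition is pinned down this is routine, but it is the delicate point that distinguishes this corollary from a verbatim copy of Corollary~\ref{Universal_grad}.
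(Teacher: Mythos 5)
Your proposal is correct and follows the paper's own route exactly: the paper proves this corollary in a single sentence by substituting \cite[Corollary 2 of Theorem 5.3]{R1994} for Corollary 1 in the argument for Corollary~\ref{Universal_grad}. The one thing you add --- an explicit check that primeness transfers across the ideal correspondence between $G$-graded $\Omega$-ideals and $\Omega_G$-ideals --- is left implicit in the paper, and your treatment of it (including the caveat about fixing Razmyslov's definition of a prime $\Omega$-algebra) is sound.
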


\section{Further applications of Razmyslov's Theorem}
Recently, a great number of researches is done concerning polynomial identities, codimension growth and its asymptotics, exponent and related properties of algebras with additional structure. To cite a few examples, see \cite{AGK2017,BL1998,BZ1998,DKL2006,DK2011,GIL2016,GMV2017,G2013,GK2014,IL2017,IM2018}. It turns out that all these algebras can be described in terms of gradings on some $\Omega$-algebras, as we will illustrate bellow.

\subsection{Algebras with involution}
Let $(A,\ast)$ be an algebra with involution. Let $\Omega_\ast=\{\mu,\ast\}$, where $\mu$ defines the original algebra product on $A$ and $\ast$ defines the involution on $A$. Thus an algebra with involution is an $\Omega_\ast$-algebra. Let $F^\ast(X)$ be the relatively free algebra satisfying the identities $x^{\ast\ast}=x$ and $(xy)^\ast=y^\ast x^\ast$.

In the classical theory, the free algebra with involution is defined as the free algebra $K\langle X\rangle$, together with applications of a symbol $\ast$. It turns out that an equivalent description is considering the free algebra $K\langle Y^+,Y^-\rangle$, with free generators $Y^+=\{y^+_i\}$ and $Y^-=\{y^-_i\}$, where we assume the variables in $Y^+$ and $Y^-$ symmetric and skew-symmetric with respect to $\ast$, respectively. The classical notion of free algebra with involution coincides with $F^\ast(X)$. In particular, the polynomial identities with involution of an algebra with involution coincides with the polynomial identities as $\Omega_\ast$-algebra. Hence, Razmylov's Theorem \ref{Razmyslov} (or Corollary \ref{Universal_grad}) translates as follows.

\begin{theorem}
Two finite-dimensional algebras with involutions over an algebraically closed field, which are involution-simple, satisfying the same polynomial identities with involutions are isomorphic as algebra with involution.
\end{theorem}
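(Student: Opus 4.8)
The plan is to mirror verbatim the argument that established Theorem \ref{grad_raz} and to reduce the statement to Razmyslov's Theorem \ref{Razmyslov} applied to the signature $\Omega_\ast=\{\mu,\ast\}$. First I would regard each of $(A,\ast)$ and $(B,\ast)$ as an $\Omega_\ast$-algebra, exactly as described before the statement: $\mu(x,y)=xy$ is the original product and the unary operation $\ast$ records the involution. Since both algebras satisfy $x^{\ast\ast}=x$ and $(xy)^\ast=y^\ast x^\ast$, these two involution axioms are automatically among their common $\Omega_\ast$-identities, so no generality is lost by working inside the variety they cut out.

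Next I would set up the two structural dictionaries that play the role of Propositions \ref{ideals} and \ref{morphism}. For ideals: a subspace $I\subseteq A$ is a $\ast$-invariant two-sided ideal precisely when it is closed under $\mu$ on both sides and under $\ast$, i.e. precisely when it is an ideal of the $\Omega_\ast$-algebra $A$; hence $(A,\ast)$ is involution-simple if and only if $A$ is simple as an $\Omega_\ast$-algebra. For morphisms: a linear map $\vp\colon A\to B$ commutes with multiplication and with $\ast$ exactly when it is a homomorphism of $\Omega_\ast$-algebras, so $(A,\ast)\cong(B,\ast)$ as algebras with involution if and only if $A\cong B$ as $\Omega_\ast$-algebras. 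Both equivalences are immediate from the definitions.

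The third ingredient is the coincidence of identities, and this is where I expect the main obstacle to lie. Here I would invoke the reduction sketched in the excerpt: working modulo the T-ideal generated by $x^{\ast\ast}=x$ and $(xy)^\ast=y^\ast x^\ast$, every $\Omega_\ast$-monomial can be rewritten so that $\ast$ is applied only to the free generators, which identifies the relatively free $\Omega_\ast$-algebra $F^\ast(X)$ with the classical free $\ast$-algebra $K\langle Y^+,Y^-\rangle$ on symmetric and skew-symmetric generators. This is the analogue of Lemma \ref{translation} and Corollary \ref{same_id}. The care required is in checking that the rewriting is complete — that every monomial reduces, by induction on degree, and that the two defining relations suffice so that no distinct $\ast$-identities are collapsed — after which one concludes that the $\ast$-polynomial identities of an algebra with involution are exactly its identities as an $\Omega_\ast$-algebra. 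In particular $A$ and $B$ satisfy the same $\ast$-identities if and only if they satisfy the same $\Omega_\ast$-identities.

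Finally, $A$ and $B$ now appear as simple finite-dimensional $\Omega_\ast$-algebras over an algebraically closed field satisfying the same $\Omega_\ast$-identities, and since $\mu\in(\Omega_\ast)_2$ the standing hypothesis $\cup_{m\ge2}\Omega_m\ne\emptyset$ holds. Razmyslov's Theorem \ref{Razmyslov} then yields an isomorphism of $\Omega_\ast$-algebras $A\cong B$, which by the morphism dictionary is an isomorphism of algebras with involution. This completes the proof.
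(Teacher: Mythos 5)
Your proposal is correct and follows essentially the same route as the paper: view $(A,\ast)$ and $(B,\ast)$ as $\Omega_\ast$-algebras with $\Omega_\ast=\{\mu,\ast\}$, identify the classical free algebra with involution with the relatively free $\Omega_\ast$-algebra modulo $x^{\ast\ast}=x$ and $(xy)^\ast=y^\ast x^\ast$ so that $\ast$-identities and $\Omega_\ast$-identities coincide, and then apply Razmyslov's Theorem together with the evident simplicity/isomorphism dictionaries. The paper presents this only as a brief sketch citing the classical theory of the free $\ast$-algebra, whereas you spell out the rewriting and the analogues of Propositions \ref{ideals} and \ref{morphism}; the substance is the same.
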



\subsection{Superalgebra with involution and superinvolution}
Superalgebra is an example of $\mathbb{Z}_2$-graded algebra. Given $(A,\ast)$ a superalgebra with involution (or superinvolution), we can consider $\Omega_{\ast,2}$, containing a binary product, the $\mathbb{Z}_2$-grading structure and $\ast$. Superalgebra with involution (or superinvolution) can be viewed as an $\Omega_{\ast,2}$-algebra. Using the same consideration made in the previous example, one concludes that the graded polynomial identities with involution of $A$ coincides with the graded polynomial identities as $\Omega_\ast$-algebra. Hence, Corollary \ref{Universal_grad} translates as follows.

\begin{theorem}
Two finite-dimensional superalgebras with involution (or superinvolution) over an algebraically closed field, which are simple as superalgebra with involution (or superinvolution), satisfying the same graded polynomial identities with involution (or superinvolution) are isomorphic as graded algebra with involution (or superinvolution).
\end{theorem}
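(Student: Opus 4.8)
The plan is to reduce the statement about superalgebras with involution to a statement about graded $\Omega$-algebras, so that Corollary \ref{Universal_grad} applies directly. The crucial observation is that a superalgebra with involution is nothing but an $\Omega_{\ast,2}$-algebra, where the signature $\Omega_{\ast,2}=\{\mu,\ast\}$ together with the $\ZZ_2$-grading structure is treated as a $\ZZ_2$-graded $\Omega$-algebra in the sense of Section~3. First I would make precise that $\ast$ is a unary operation and that, in the case of a superinvolution, the grading-compatibility condition $\omega\left(\left(\otimes^m A\right)_g\right)\subset A_g$ of Section~3 encodes exactly that $\ast$ preserves the $\ZZ_2$-degree and satisfies the graded sign rule $(xy)^\ast=(-1)^{|x||y|}y^\ast x^\ast$ on homogeneous elements. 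Thus $(A,\ast)$ becomes a $\ZZ_2$-graded $\Omega$-algebra with $\Omega=\{\mu,\ast\}$.

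Next I would verify the three standard dictionary facts. First, $(A,\ast)$ is simple as a superalgebra with (super)involution if and only if it is simple as a $\ZZ_2$-graded $\Omega$-algebra; this follows because the $\ast$-invariant graded ideals of $(A,\ast)$ are precisely the $\Omega$-ideals respecting the grading, so that Proposition~\ref{ideals} (in its $\Omega$-graded form, stated as the Lemma at the start of Section~3) transfers simplicity. Second, a map $\vp:A\to B$ is an isomorphism of superalgebras with (super)involution if and only if it is an isomorphism of $\ZZ_2$-graded $\Omega$-algebras, by Proposition~\ref{morphism} adapted to the present signature, since preserving $\mu$, the grading, and $\ast$ is exactly what both notions require. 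Third, and most importantly, I would argue that the graded polynomial identities with (super)involution of $A$ coincide with its graded polynomial identities as a $\ZZ_2$-graded $\Omega$-algebra. This is the step where the free-object identification matters: one checks that the classical free superalgebra with involution, built from symmetric and skew-symmetric generators in each $\ZZ_2$-degree, is isomorphic to the relatively free $\ZZ_2$-graded $\Omega$-algebra $F_\Omega^\text{gr}$ modulo the defining identities $x^{\ast\ast}=x$ and the graded sign rule, exactly as in the involution example of the preceding subsection.

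With the dictionary in place, the result is immediate: since $A$ and $B$ satisfy the same graded identities with (super)involution, they satisfy the same graded polynomial identities as $\ZZ_2$-graded $\Omega$-algebras; being finite-dimensional, simple as $\ZZ_2$-graded $\Omega$-algebras, and over an algebraically closed field, Corollary~\ref{Universal_grad} yields an isomorphism of $\ZZ_2$-graded $\Omega$-algebras, which the morphism dictionary reinterprets as an isomorphism of (super)algebras with (super)involution.

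I expect the main obstacle to be the third dictionary fact, namely pinning down the correct free object and confirming that the translation $\psi_G$ of the Lemma preceding Corollary~\ref{Universal_grad} carries the ideal of graded $\ast$-identities onto the ideal of graded $\Omega_{\ast,2}$-identities. The subtlety is that the unary operation $\ast$ must be incorporated into the signature \emph{before} one passes to $\Omega_G$, and one must ensure that the sign convention in the superinvolution case is encoded entirely through the grading-compatibility axiom rather than as an extra identity; once this is handled uniformly for both the involution and the superinvolution cases, the remainder is a routine application of the machinery already developed.
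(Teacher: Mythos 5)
Your proposal is correct and follows essentially the same route as the paper: view a superalgebra with (super)involution as a $\ZZ_2$-graded $\Omega$-algebra with $\Omega=\{\mu,\ast\}$, check that simplicity, isomorphism, and graded identities translate, and invoke Corollary \ref{Universal_grad}. One small inaccuracy: the grading-compatibility axiom of Section 3 only forces $\ast(A_g)\subset A_g$; the sign rule $(xy)^\ast=(-1)^{|x||y|}y^\ast x^\ast$ of a superinvolution is \emph{not} encoded by that axiom and must instead be imposed as a defining graded identity of the relatively free algebra (exactly as $(xy)^\ast=y^\ast x^\ast$ is in the involution case) --- but since both $A$ and $B$ satisfy it, this does not affect the correspondence of identity ideals or the validity of the argument.
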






\subsection{Colour Lie superalgebra}
Let $L=\bigoplus_{g\in G}L_g$ a $G$-graded algebra with product $[\cdot,\cdot]$ such that there exists an alternating bicharacter $\ve:G\times G\to K^\ast$ satisfying, for $x_i\in L_{g_i}$, $i=1,2,3$:
\begin{align*}
&[x_1,x_2]=-\ve(g_1,g_2)[x_2,x_1],\\
&[x_1,[x_2,x_3]]=[[x_1,x_2],x_3]+\ve(g_1,g_2)[x_2,[x_1,x_3]].
\end{align*}
Then $(L,\ve)$ is called a colour Lie superalgebra (see, for instance, \cite{BMPZ}). We can view colour Lie superalgebras as non-associative graded algebras, hence colour Lie superalgebras is a particular example of $\Omega$-algebras.

Another related example, given that $\text{char}\,K=p>0$, is a colour Lie $p$-superalgebra. A colour Lie $p$-superalgebra is a colour Lie superalgebra $L$ with an additional partial map $x\mapsto x^{[p]}$, defined on some homogeneous components, satisfying the following:
\begin{align*}
&(\alpha x)^{[p]}=\alpha^p x^{[p]},\\
&(\text{ad}\,x^{[p]})(z)=[x^{[p]},z]=(\text{ad}\,x)^p(z),\\
&(x+y)^{[p]}=x^{[p]}+y^{[p]}+\sum_i s_i(x,y),
\end{align*}
where $s_i$ is some polynomial. Note that, $x\mapsto x^{[p]}$ is not always linear. Hence, we cannot always see the ``raising to $p$-th power" as an unary operation. So, it is not obvious how we can describe a colour Lie $p$-superalgebra as an $\Omega$-algebra. However, in the context of simple algebras, the second identity completely defines $\text{ad}\,x^{[p]}$. Since $\text{ad}$ is an isomorphism for a simple finite-dimensional algebra, we conclude that the $p$-th power map is completely defined by the product of the algebra. In this way, an isomorphism of colour Lie $p$-superalgebras preserving the product will preserve the $p$-th power as well. Hence, Theorem \ref{grad_raz} (or Corollary \ref{Universal_grad}) reads as follows.

\begin{theorem}
Two finite-dimensional colour Lie ($p$-)superalgebras over an algebraically closed field, which are simple as colour Lie superalgebras, satisfying the same graded polynomial identities are isomorphic as colour Lie ($p$-)superalgebras.
\end{theorem}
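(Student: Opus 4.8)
The plan is to reduce the statement to the already-established graded version of Razmyslov's theorem. The crucial observation is that a colour Lie superalgebra $(L,\ve)$ carries only one genuine operation---the bracket $[\cdot,\cdot]$---together with its fixed $G$-grading, so it is nothing but a $G$-graded (non-associative) algebra in the sense of Section 2. Consequently both Theorem \ref{grad_raz} and Corollary \ref{Universal_grad} apply verbatim, once we check that the colour-Lie axioms are graded polynomial identities and that graded simplicity coincides with simplicity as a $G$-graded algebra.

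First I would treat the plain colour case. The colour-anticommutativity relation $[x,y]+\ve(g,h)[y,x]=0$ (for generators $x,y$ of respective degrees $g,h$) and the colour Jacobi identity are, by inspection, $G$-graded polynomial identities in the single binary operation, hence they lie in the graded T-ideal of $L$. Graded simplicity of $L$ is exactly simplicity of $L$ as a $G$-graded algebra. Thus, if $A$ and $B$ are graded-simple colour Lie superalgebras with the same graded identities, Theorem \ref{grad_raz} produces a $G$-graded algebra isomorphism $\vp\colon A\to B$, that is, a grading-preserving linear bijection with $\vp([x,y])=[\vp x,\vp y]$. It remains to see that $\vp$ is an isomorphism of colour Lie superalgebras; this is automatic once the bicharacters agree, and they do: comparing the anticommutativity identities of $A$ and $B$ shows that $(\ve_A(g,h)-\ve_B(g,h))[y,x]=0$ holds identically in $B$, so $\ve_A$ and $\ve_B$ coincide on every pair of degrees for which the bracket is nonzero, which is all that the colour structure sees.

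For the restricted ($p$-) case the difficulty, and the main obstacle, is that the map $x\mapsto x^{[p]}$ is only semilinear ($(\alpha x)^{[p]}=\alpha^p x^{[p]}$) and obeys the non-additive rule involving the polynomials $s_i$, so it cannot be recorded as an $m$-ary operation of any signature $\Omega$, and the machinery of $\Omega$-algebras does not see it directly. I would resolve this exactly as the surrounding text indicates: for a finite-dimensional simple colour Lie superalgebra the centre---the kernel of $\ad$, which is a graded ideal---vanishes by simplicity, so $\ad$ is injective, and the axiom $\ad(x^{[p]})=(\ad x)^p$ then determines $x^{[p]}$ uniquely from the bracket. Hence the $p$-operation is a derived notion rather than independent data. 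Applying the plain case to the underlying bracket algebras yields a $G$-graded bracket isomorphism $\vp\colon A\to B$; since $\ad_B(\vp x)=\vp\,\ad_A(x)\,\vp^{-1}$, one obtains $\ad_B\big(\vp(x^{[p]})\big)=(\ad_B \vp x)^p=\ad_B\big((\vp x)^{[p]}\big)$, and injectivity of $\ad_B$ forces $\vp(x^{[p]})=(\vp x)^{[p]}$. Thus $\vp$ automatically respects the $p$-structure, completing the argument. The only genuinely new content beyond the earlier sections is this recovery of the $p$-map from the bracket; everything else is a direct transcription of Theorem \ref{grad_raz}.
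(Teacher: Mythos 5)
Your proposal is correct and follows essentially the same route as the paper: view a colour Lie superalgebra as a $G$-graded non-associative algebra and invoke Theorem \ref{grad_raz}, then recover the $p$-map from the bracket via the injectivity of $\ad$ on a simple algebra so that the graded isomorphism automatically preserves it. Your extra remarks (checking that the bicharacters must agree where the bracket is nonzero, and the explicit computation $\ad_B(\vp(x^{[p]}))=(\ad_B\vp x)^p$) only make explicit what the paper leaves implicit.
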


\subsection{Trace identities}
Another important example to consider is trace identities. We consider the signature $\Omega_\text{tr}=\{\text{tr},\mu\}$, where $\mu$ is a binary operation and $\text{tr}$ is an unary operation. For a matrix algebra $M_n$, the signature $\Omega_\text{tr}$ consists of the following operations: $\mu$ is the usual matrix multiplication and $\text{tr}(A)=a\cdot I$, where $a$ is the usual trace of $A$ and $I$ is the $n\times n$ identity matrix.

The classical free algebra with trace is defined as the relatively free algebra with signature $\Omega_\text{tr}$ satisfying the following polynomial identities (see \cite{Raz1974}):
\begin{itemize}
\item $\text{tr}(x)y=y\text{tr}(x)$,
\item $\text{tr}(xy)=\text{tr}(yx)$,
\item $\text{tr}(x\text{tr}(y))=\text{tr}(x)\text{tr}(y)$.
\end{itemize}
So, Razmyslov's Theorem \ref{Razmyslov} (or Corollary \ref{Universal_grad}) translates as:

\begin{theorem}
Two finite-dimensional algebras over an algebraically closed field, which are simple as algebra with trace, satisfying the same trace polynomial identities are isomorphic as algebras with trace.
\end{theorem}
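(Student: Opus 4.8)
The plan is to realise an algebra with trace as an $\Omega_\text{tr}$-algebra and then invoke Razmyslov's Theorem \ref{Razmyslov}, exactly along the lines of the involution and grading cases treated above. So let $A$ and $B$ be finite-dimensional algebras with trace over an algebraically closed field $K$, simple as algebras with trace, and assume they satisfy the same trace polynomial identities. Since $\Omega_\text{tr}=\{\mu,\text{tr}\}$ contains the binary operation $\mu$, we have $\Omega_2\ne\emptyset$, so Razmyslov's Theorem is applicable to $\Omega_\text{tr}$-algebras. The argument then splits into three reductions followed by a single application of that theorem.

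First I would record the two structural dictionaries, in complete analogy with Propositions \ref{ideals} and \ref{morphism}. A subspace $I\subseteq A$ is a trace-ideal precisely when it is a two-sided ideal closed under $\text{tr}$, which is exactly the condition for $I$ to be an ideal of $A$ viewed as an $\Omega_\text{tr}$-algebra; hence $A$ is simple as an algebra with trace if and only if it is simple as an $\Omega_\text{tr}$-algebra, and likewise for $B$. In the same way a linear map $\vp\colon A\to B$ respects both the product and the trace if and only if it is a homomorphism of $\Omega_\text{tr}$-algebras, so the two notions of isomorphism coincide. Both verifications are immediate from the definitions and preserve finite-dimensionality, since the underlying vector space is unchanged.

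The key step is to match the identities. Here I would use the description, recalled above, of the classical free algebra with trace as the relatively free $\Omega_\text{tr}$-algebra $F^{\text{tr}}(X)=F_{\Omega_\text{tr}}(X)/T_0$, where $T_0$ is the T-ideal generated by the three trace axioms $\text{tr}(x)y=y\,\text{tr}(x)$, $\text{tr}(xy)=\text{tr}(yx)$ and $\text{tr}(x\,\text{tr}(y))=\text{tr}(x)\text{tr}(y)$. Since $A$ is an algebra with trace it satisfies these three axioms, so $T_0\subseteq T_{\Omega_\text{tr}}(A)$, and the trace polynomial identities of $A$ are precisely the image of $T_{\Omega_\text{tr}}(A)$ in $F^{\text{tr}}(X)$, namely $T_{\Omega_\text{tr}}(A)/T_0$; the same holds for $B$. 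As $T_0$ is contained in both $T_{\Omega_\text{tr}}(A)$ and $T_{\Omega_\text{tr}}(B)$, knowing the quotient $T_{\Omega_\text{tr}}(A)/T_0$ is equivalent to knowing $T_{\Omega_\text{tr}}(A)$ itself. Consequently $A$ and $B$ share the same trace identities if and only if they satisfy the same polynomial identities as $\Omega_\text{tr}$-algebras.

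With these reductions in hand the conclusion is immediate: $A$ and $B$ are finite-dimensional simple $\Omega_\text{tr}$-algebras over $K$ with identical $\Omega_\text{tr}$-identities, so Razmyslov's Theorem \ref{Razmyslov} provides an isomorphism of $\Omega_\text{tr}$-algebras, which by the morphism dictionary of the second paragraph is an isomorphism of algebras with trace. The only step requiring genuine care is the identity-matching of the third paragraph: one must confirm that passing to the quotient by $T_0$ loses no information, i.e.\ that every trace identity pulls back to an $\Omega_\text{tr}$-identity modulo the defining axioms. This is the exact analogue of Lemma \ref{translation} in the graded setting, and, as there, it is a bookkeeping verification rather than a substantial difficulty, precisely because $A$ and $B$ both satisfy $T_0$ from the outset.
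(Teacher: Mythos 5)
Your proposal is correct and follows essentially the same route as the paper, which handles the trace case by identifying the classical free algebra with trace with the relatively free $\Omega_\text{tr}$-algebra modulo the three trace axioms and then invoking Razmyslov's Theorem; the paper leaves the dictionary between trace-ideals/homomorphisms/identities and their $\Omega_\text{tr}$-counterparts implicit, and you have simply made those routine verifications explicit. In particular your observation that $T_0\subseteq T_{\Omega_\text{tr}}(A)\cap T_{\Omega_\text{tr}}(B)$, so that equality of the quotients is equivalent to equality of the T-ideals themselves, is exactly the point the paper relies on without comment.
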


Traces of generic matrices are related to invariants of matrix algebras. Also, it is known that trace polynomial identities of matrix algebras are consequences of the Cayley-Hamilton identity \cite{Raz1974,Pro1976}. It is worth mentioning that \textit{ordinary} polynomial identities for the matrix algebra $M_n$ are known only when $n\le2$, for infinite fields.

Let $A_1=M_{n_1}\oplus\cdots\oplus M_{n_r}$ and $A_2=M_{n_1'}\oplus\cdots\oplus M_{n_s'}$, and assume $n_1\ge n_2\ge\ldots\ge n_r$, $n_1'\ge\ldots\ge n_s'$, $r>1$ and $s>1$. It is clear that $A_1\cong A_2$ if and only if $r=s$ and $n_i=n_i'$ for all $i$. Moreover, assume $n_1=n_1'$. In this case, $A_1$ and $A_2$ satisfy the same polynomial identities, namely, the polynomial identities of the matrix algebra $M_{n_1}$. Both are not simple as ordinary algebras, but they are $\text{tr}$-simple, if we define trace as the induced trace from $M_n$, where $n=n_1+\ldots+n_r$. Our results say that we can find a trace identity satisfied by one algebra, but not by the other.

\subsection{Algebras with the action of Hopf algebras}
Let $A$ be an algebra (with a unique binary operation) and $H$ a Hopf algebra. We say that $A$ is a left $H$-algebra, or an $H$-module algebra, if $A$ is an unital left $H$-module and for any $a,b\in A$ and $g,h\in H$ the following hold (see \cite{BL1998}):
\begin{itemize}
\item $(gh)\ast a=g\ast(h\ast a)$,
\item $h\ast(ab)=\sum(h_{(1)}\ast a)(h_{(2)}\ast b)$.
\end{itemize}

Algebras with Hopf actions include important examples. We cite two of them.
\begin{description}
\item[Action by automorphisms] Let $G$ be a subgroup of the group of automorphisms of the algebra $A$. It is well-known that the group algebra $KG$ is a Hopf algebra. Then the action of $G$ on $A$ is a particular case of Hopf action by the group algebra $KG$.
\item[Action by derivations] Let $D$ be a Lie subalgebra of the algebra of derivations of $A$. Then the universal enveloping algebra $U(D)$ is a Hopf algebra. Thus the action of $D$ on $A$ can be viewed as a Hopf action of $U(D)$.
\end{description}

Now we present the classical construction of the free Hopf module algebras (see \cite{BL1998}). Fix a Hopf algebra $H$. Let $T=T(H)=\sum_{n\ge1}\otimes^n H$ be the tensor algebra of the vector space $H$, not containing the field. Each $\otimes^n H$ is a $H$-module, by means of
\begin{align*}
h\ast(h_1\otimes\cdots\otimes h_n)=\sum(h_{(1)}h_1)\otimes\cdots\otimes(h_{(n)}h_n).
\end{align*}
Hence $T$ is an $H$-module as well. Let $X$ be a set of variables and let $T(X)$ be the vector space generated by all $tw$, where $t\in T$ and $w$ is a non-associative word. Then $T(X)$ is a $H$-module if we define $h\ast tw=(h\ast t)w$, for $h\in H$. Now, let $\mathscr{H}(X)$ be the vector subspace of $T(X)$ generated by all $tw$, with $|t|=|w|$. By \cite[Proposition 1]{BL1998}, $\mathscr{H}(X)$ has the following Universal property. If $A$ is any $H$-algebra and $\varphi:X\to A$ is any map, then there exists unique homomorphism of $H$-algebras $\bar\varphi:\mathscr{H}(X)\to A$ extending $\varphi$. Hence, one naturally defines polynomial identities of $H$-algebras, using elements of $\mathscr{H}(X)$.

Now, let $\Omega_H=\{\mu\}\cup\{\rho_h\mid h\in H\}$. If $A$ is an $H$-algebra, then the operations in $\Omega_H$ are defined on  $A$ in the following way: $\mu$ defines the original product and for each $h\in H$, $\rho_h(a):=h\ast a$. Consider the relatively free $\Omega_H$-algebra defined by the following polynomial identities:
\begin{enumerate}
\renewcommand{\labelenumi}{\roman{enumi}.}
\item $\rho_h(x)+\rho_g(x)=\rho_{h+g}(x)$,
\item $\rho_{\alpha h}(x)=\alpha\rho_h(x)$, for $\alpha\in K$,
\item $\rho_1(x)=x$ (where $1\in H$ is the unit),
\item $\rho_h(\rho_g(x))=\rho_{hg}(x)$,
\item $\rho_h(xy)=\sum (\rho_{h_{(1)}}x)(\rho_{h_{(2)}}y)$.
\end{enumerate}

A similar argument as given in Lemma \ref{translation} can be used to translate $H$-polynomial identities into polynomial identities of the relatively free $\Omega_H$-algebra satisfying identities i--v above. It is not hard to see that two $H$-algebras are isomorphic if and only if they are isomorphic as $\Omega_H$-algebras, under the operations defined above. Hence, Razmyslov's Theorem can be applied.
\begin{theorem}
Two finite-dimensional $H$-algebras over an algebraically closed field, which are simple as $H$-algebras and satisfy the same $H$-polynomial identities, are isomorphic as $H$-algebras.
\end{theorem}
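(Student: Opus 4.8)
The plan is to reuse, essentially verbatim, the mechanism of Sections~2 and~3: present an $H$-algebra as an $\Omega_H$-algebra, match the notions of ideal, identity and morphism on the two sides, and then quote Razmyslov's Theorem~\ref{Razmyslov}. Viewing $A$ as an $\Omega_H$-algebra through $\mu(a,b)=ab$ and $\rho_h(a)=h\ast a$, the first task is to record the two structural dictionaries that play the role of Propositions~\ref{ideals} and~\ref{morphism}. A subspace $I\subseteq A$ is an $\Omega_H$-ideal precisely when it is a two-sided ideal that is in addition stable under every $\rho_h$, i.e. under the $H$-action; so $\Omega_H$-ideals are exactly the $H$-ideals, and $A$ is $\Omega_H$-simple if and only if it is simple as an $H$-algebra. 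In the same way a linear map $\vp\colon A\to B$ is an $\Omega_H$-homomorphism if and only if it is multiplicative and intertwines the $\rho_h$, that is, if and only if it is a homomorphism of $H$-algebras; hence $\Omega_H$-isomorphism and $H$-algebra isomorphism coincide.

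The substance of the proof is the translation of identities. Let $T$ be the T-ideal of $F_{\Omega_H}(X)$ generated by the relations (i)--(v), and set $F_{\Omega_H}^{(T)}(X)=F_{\Omega_H}(X)/T$. I would first argue that $F_{\Omega_H}^{(T)}(X)$ is canonically isomorphic to the free $H$-algebra $\mathscr{H}(X)$. Indeed, relations (iii) and (iv) make $h\mapsto\rho_h$ a unital multiplicative action of $H$, relations (i)--(ii) force this action to be $K$-linear in $h$, and relation (v) is exactly the $H$-module-algebra compatibility between the action and $\mu$; thus $F_{\Omega_H}^{(T)}(X)$ carries a genuine $H$-algebra structure. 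The universal property of $\mathscr{H}(X)$ then extends $X\hookrightarrow F_{\Omega_H}^{(T)}(X)$ to an $H$-algebra homomorphism $\mathscr{H}(X)\to F_{\Omega_H}^{(T)}(X)$, while freeness of $F_{\Omega_H}(X)$ together with the fact that $\mathscr{H}(X)$ already satisfies (i)--(v) produces an $\Omega_H$-homomorphism in the opposite direction; these are mutually inverse, and the resulting isomorphism $\psi_H$ is fixed by $\rho_h(x)\mapsto h\ast x$.

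It remains to see that $\psi_H$ matches $H$-identities with $\Omega_H$-identities, and here I would copy the evaluation argument of Lemma~\ref{translation}. An $H$-evaluation $x_i\mapsto a_i$ of $\mathscr{H}(X)$ and the $\Omega_H$-evaluation $x_i\mapsto a_i$ of $F_{\Omega_H}^{(T)}(X)$ agree on $\psi_H$-corresponding elements; this is checked by induction on degree, the inductive step for a product being precisely relation (v) together with the definition $\rho_h(a)=h\ast a$. Consequently $f=0$ is an $H$-identity of $A$ if and only if $\psi_H(f)=0$ is an $\Omega_H$-identity of $A$, so two $H$-algebras share all $H$-identities exactly when they share all $\Omega_H$-identities. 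Applying Theorem~\ref{Razmyslov} to finite-dimensional, $\Omega_H$-simple $A$ and $B$ having the same $\Omega_H$-identities yields an $\Omega_H$-isomorphism $A\cong B$, which by the morphism dictionary of the first paragraph is an isomorphism of $H$-algebras.

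The step I expect to be the main obstacle is the reduction underlying the bijectivity of $\psi_H$, namely the analogue of Lemma~\ref{pol_gen}: working modulo $T$, one must reduce an arbitrary $\Omega_H$-monomial to a linear combination of normal forms in which every $\rho_h$ is applied directly to a single variable, so that the span of these normal forms is identified with the span of the elements $tw$, $|t|=|w|$, describing $\mathscr{H}(X)$. This is genuinely harder than in the graded case because the operations $\rho_h$ are not orthogonal idempotent projections: the relations (i)--(ii) impose $K$-linear dependencies among the $\rho_h$, so pushing $\rho_h$ through a product via (v) forces one to track the comultiplication $\sum h_{(1)}\otimes h_{(2)}$, and one must verify that no further linear relations are created that would destroy the injectivity of $\psi_H$.
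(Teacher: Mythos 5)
Your proposal is correct and its skeleton is the one the paper uses: encode the $H$-action by unary operations $\rho_h$, check that $\Omega_H$-ideals and $\Omega_H$-morphisms are exactly $H$-ideals and $H$-morphisms, translate $H$-identities into $\Omega_H$-identities by the evaluation argument of Lemma~\ref{translation}, and invoke Theorem~\ref{Razmyslov}; the paper only sketches these steps for this application, so your write-up is in fact more complete than the source. The one place where you genuinely depart from the paper's template is the identification of $F_{\Omega_H}(X)/T$ with $\mathscr{H}(X)$: the Section~2 pattern would prove a spanning/normal-form lemma in the style of Lemma~\ref{pol_gen} (using $\rho_1(x)=x$ in place of $x=\sum_{g}\pi_g(x)$, and (iv)--(v) to push every $\rho_h$ down to the variables), whereas you produce mutually inverse maps from the universal property of $\mathscr{H}(X)$ and the freeness of $F_{\Omega_H}(X)$. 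That argument is sound, and it already disposes of the injectivity issue you flag in your last paragraph, since the composite $\mathscr{H}(X)\to F_{\Omega_H}(X)/T\to\mathscr{H}(X)$ is an $H$-algebra endomorphism fixing $X$ and hence the identity; note also that injectivity is not actually needed for the theorem, because the equivalence \emph{same $H$-identities if and only if same $\Omega_H$-identities} only uses the evaluation correspondence together with surjectivity modulo $T$ (and $T$ lies in the $\Omega_H$-T-ideal of every $H$-module algebra). So the obstacle you anticipate at the end is not one, and your argument closes the gap more cleanly than a direct normal-form computation would.
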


\subsection{Algebras with generalized action} 
Let $\cH=(H,\Delta^{(1)}, \Delta^{(2)})$ be a triple where $H$ is a unital associative algebra and $\Delta^{(1)}, \Delta^{(2)}$ are two linear maps, called coproducts $\Delta^{(1)}, \Delta^{(2)}: H\to H\ot H$. Using Sweedler's notation, we can write $\Delta^{(i)}(h)=h^{(i)}_{(1)}\ot h^{(i)}_{(2)}$, meaning that $\Delta^{(i)}(h)$ are arbitrary tensors of degree 2. In distinction with Hopf algebras, we impose no restrictions on the coproducts. 

An algebra $A$ is called an $\cH$-algebra if $A$ is a left $H$-module via $(h,a)\to h\ast a$, for any $h\in H$ and $a\in A$ and for any $a,b\in A$, one has
\[
h\ast(ab)=(h^{(1)}_{(1)}\ast a)(h^{(1)}_{(2)}\ast b)+(h^{(2)}_{(1)}\ast b)(h^{(2)}_{(2)}\ast a)
\]
Such algebras, with a minor modification, have appeared in \cite{B1996,G2013}.  In the natural way one can define the notions of the homomorphisms of $\cH$-algebras, simple $\cH$-algebras and so on. The construction of a free $\cH$-algebra also does not create any problems (see algebras with $H$-action above) and so one can speak about $\cH$-identities. As earlier, on can define the set $\Omega_{\cH}$ consisting of one binary operation $\mu$ and the set of unary operations $\rho_h$, for each $h\in H$. As earlier, if $A$ is a $\cH$-algebra then $\rho_h(a)=h\ast a$.  The variety of $\Omega_\cH$-algebras is distinguished by the family of identical relations, one for each $h\in H$:
\[
\rho_h(\mu(x,y))=\mu(\rho_{h^{(1)}_{(1)}}(x),\rho_{h^{(1)}_{(2)}}(y))+\mu(\rho_{h^{(2)}_{(1)}}(y),\rho_{h^{(2)}_{(2)}}(x))
\]
The $\cH$-identities can be rewritten as $\Omega_{\cH}$-identities, following the approach of Lemma \ref{translation}. Skipping obvious details, we obtain one more consequence of Razmyslov's Theorem.
\begin{theorem}
Two finite-dimensional simple $\cH$-algebras over an algebraically closed field, satisfying the same $\cH$-polynomial identities, are isomorphic as $\cH$-algebras.
\end{theorem}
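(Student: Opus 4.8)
The plan is to reproduce, for the signature $\Omega_{\cH}=\{\mu\}\cup\{\rho_h\mid h\in H\}$, the three-step scheme already carried out in Section~2 for semigroup gradings: realize an $\cH$-algebra as an $\Omega_{\cH}$-algebra, match $\cH$-identities with $\Omega_{\cH}$-identities, and then invoke Razmyslov's Theorem~\ref{Razmyslov}. Each $\cH$-algebra $A$ becomes an $\Omega_{\cH}$-algebra by setting $\mu(a,b)=ab$ and $\rho_h(a)=h\ast a$. To distinguish the $\Omega_{\cH}$-algebras arising this way, I would impose the module axioms, rewritten as identities $\rho_{h+g}(x)=\rho_h(x)+\rho_g(x)$, $\rho_{\alpha h}(x)=\alpha\rho_h(x)$, $\rho_1(x)=x$, $\rho_h(\rho_g(x))=\rho_{hg}(x)$, together with the twisted Leibniz relation
\[
\rho_h(\mu(x,y))=\mu(\rho_{h^{(1)}_{(1)}}(x),\rho_{h^{(1)}_{(2)}}(y))+\mu(\rho_{h^{(2)}_{(1)}}(y),\rho_{h^{(2)}_{(2)}}(x)),
\]
one for each $h\in H$, and I would let $T$ be the $T$-ideal they generate in the free $\Omega_{\cH}$-algebra $F_{\Omega_{\cH}}(X)$. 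Since these are exactly the defining relations of an $\cH$-algebra, $T\subseteq\mathrm{Id}_{\Omega_{\cH}}(A)\cap\mathrm{Id}_{\Omega_{\cH}}(B)$.

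First I would prove a spanning lemma that is the precise analogue of Lemma~\ref{pol_gen}. Working modulo $T$, the module identities collapse any composition $\rho_h\rho_g$ to a single $\rho_{hg}$ and make $\rho_h$ linear in $h$, while the twisted Leibniz relation lets me push any outer $\rho_h$ through a product $\mu(u,v)$, producing a linear combination of products of $\rho$-images of $u$ and $v$. Iterating by induction on the degree, every monomial of $F_{\Omega_{\cH}}(X)$ reduces modulo $T$ to a linear combination of \emph{reduced} monomials, those in which a single $\rho_h$ is applied to each variable leaf and nowhere else. These reduced monomials are precisely the images under the map $\psi$ extending $\mathrm{id}_X$ of the spanning monomials of the free $\cH$-algebra on $X$ (built exactly as the free $H$-module algebra $\mathscr{H}(X)$ of the previous subsection), where $\psi(h\ast x)=\rho_h(x)$. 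The spanning lemma says $\psi$ is surjective onto $F_{\Omega_{\cH}}^A(X)$.

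Next I would establish the translation lemma analogous to Lemma~\ref{translation}. Because there is no grading and the $\rho_h$ are genuine operations rather than projections, a single map $\varphi\colon X\to A$ serves simultaneously as an $\cH$-evaluation and an $\Omega_{\cH}$-evaluation, and since $\psi$ is a homomorphism with $\rho_h(a)=h\ast a$ in $A$, one gets $\overline{\varphi}(\psi(w))=\overline{\varphi}(w)$ for every monomial $w$ (the inductive step being the twisted Leibniz relation read inside $A$). Hence $f=0$ is an $\cH$-identity of $A$ if and only if $\psi(f)=0$ is an $\Omega_{\cH}$-identity of $A$; combined with the spanning lemma and with $T\subseteq\mathrm{Id}_{\Omega_{\cH}}(A)\cap\mathrm{Id}_{\Omega_{\cH}}(B)$, this yields the counterpart of Corollary~\ref{same_id}: $A$ and $B$ share their $\cH$-identities if and only if they share their $\Omega_{\cH}$-identities. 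The analogues of Propositions~\ref{ideals} and~\ref{morphism} are immediate, since an $\Omega_{\cH}$-ideal is exactly an $H$-stable two-sided ideal and a linear bijection preserves products and the $H$-action if and only if it is an $\Omega_{\cH}$-isomorphism; thus $A$ is $\cH$-simple iff it is $\Omega_{\cH}$-simple, and $\cH$-isomorphic iff $\Omega_{\cH}$-isomorphic. As $\mu\in\Omega_2$, Theorem~\ref{Razmyslov} applies to the finite-dimensional $\Omega_{\cH}$-simple algebras $A$ and $B$, and the resulting $\Omega_{\cH}$-isomorphism is an $\cH$-isomorphism.

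The main obstacle I anticipate is the reduction underlying the spanning lemma, for two reasons. First, \emph{no} axiom is imposed on the coproducts $\Delta^{(1)},\Delta^{(2)}$ --- no coassociativity and no counit --- and the twisted Leibniz rule is not a true derivation rule, its second summand even interchanging the two arguments; I must check that pushing operators downward through arbitrarily nested products terminates and that the relations in $T$ genuinely suffice to bring every monomial to reduced form (uniqueness of the reduced form is not needed, only that $T$ together with the reduced monomials spans $F_{\Omega_{\cH}}(X)$, which is all Corollary~\ref{same_id} requires). Second, $H$ is in general infinite-dimensional, so $\Omega_{\cH}$ carries an infinite family of unary operations and the identities above form an infinite family; here the linearity identities must be handled carefully so that the reduced monomials, which are multilinear in their leaf-labels, correspond correctly to the tensors $\otimes^{n}H$ appearing in $\mathscr{H}(X)$. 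Once these bookkeeping points are settled, the remaining verifications are routine.
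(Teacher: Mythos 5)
Your proposal is correct and follows essentially the same route as the paper, which itself only sketches this case by listing the defining identities of the variety of $\Omega_{\cH}$-algebras and pointing to the approach of Lemma~\ref{translation} before ``skipping obvious details.'' Your spanning and translation lemmas, together with the remarks on termination of the rewriting and on the lack of coassociativity/counit axioms, supply precisely those skipped details without changing the argument.
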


\ \\[0cm]
\noindent\textbf{Acknowledgments.} This work was completed while the second author visited Memorial University of Newfoundland. He appreciates its hospitality and resources. Special thanks are due to Prof. Plamen Koshlukov for suggesting the problem and pointing out good references. Both authors are grateful to Prof. Mikhail Kochetov for useful discussions.

The first author was supported by Discovery Grant 227060-2014 of the Natural Sciences and Engineering Research Council of Canada (NSERC).

The second author was supported by Fapesp, grant 2017/11.018-9.

\end{document}